\def\NAT@def@citea{\def\@citea{\NAT@separator}}% Suppress spaces between citations using natbib.sty
\theoremstyle{plain}
\newtheorem{theorem}{Theorem}[section]
\newtheorem{lemma}[theorem]{Lemma}
\newtheorem{corollary}[theorem]{Corollary}
\theoremstyle{definition}
\theoremstyle{remark}
\newtheorem{remark}[theorem]{Remark}
\newtheorem*{remark*}{Remark}
\numberwithin{equation}{section}
\newcommand\D{{\mathcal D}}
\newcommand\A{{\mathcal A}}
\newcommand\RR{{\mathbb R}}
\newcommand\ZZ{{\mathbb Z}}
\newcommand\U{{\mathfrak{U}}}
\newcommand\Rc{{\mathcal R}}
\newcommand\R{{\mathfrak R}}
\newcommand\alp{\tilde{\mathfrak D}}
\newcommand\al{\mathfrak D}
\newcommand\Sh{\mbox{\Large $\mathfrak {s}$}}
\begin{document}

\title{Bispectral Laguerre type polynomials}

\author{
\name{Antonio J. Dur\'an\textsuperscript{a} and Manuel D. de la Iglesia\textsuperscript{b}\textsuperscript{*}\footnote{*Corresponding author. Email: mdi29@im.unam.mx}}
\affil{\textsuperscript{a} Departamento de An\'{a}lisis Matem\'{a}tico. Universidad de Sevilla. Apdo (P. O. BOX) 1160. 41080 Sevilla. Spain.
\textsuperscript{b}Instituto de Matem\'aticas. Universidad Nacional Aut\'onoma de M\'exico. Circuito Exterior, C.U. 04510, Ciudad de M\'exico. M\'exico.}
}

\maketitle

\begin{abstract}
We study the bispectrality of Laguerre type polynomials, which are defined by taking suitable linear combinations of a fixed number of consecutive Laguerre polynomials. These Laguerre type polynomials are eigenfunctions of higher-order differential operators and include, as particular cases, the Krall-Laguerre polynomials. As the main results, we prove that these Laguerre type polynomials always satisfy higher-order recurrence relations (i.e., they are bispectral). We also prove that the Krall-Laguerre families are the only polynomials which are orthogonal with respect to a measure on the real line.
\end{abstract}

\begin{keywords}
Orthogonal polynomials; Bispectral orthogonal polynomials; Recurrence relations; Krall polynomials; Laguerre polynomials.
\end{keywords}

\begin{amscode}
42C05; 33C45; 33E30.
\end{amscode}

\section{Introduction and results}
The explicit solution of certain mathematical models of physical interest (in Statistical Mechanics, Potential Theory, Electromagnetism or Quantum Mechanics) often depends on the use of certain special functions which in many cases turn out to be classical families of orthogonal polynomials. These families can be seen as solutions of a special type of bispectral problems.
Bispectrality in its continuous-continuous version is a subject that was started by H. Duistermaat and F.A. Grünbaum in the 1980s \cite{DuiGr}.  One special case of the bispectral problem is the problem of determining \textit{classical orthogonal polynomials}, a notion that goes back to S. Bochner \cite{Boc} (see also \cite{Rou}) who proved in 1929 that the families of Hermite, Laguerre and Jacobi (and Bessel, if non-positive measures are considered) are the only families of polynomials $(q_n(x))_n$, $\deg q_n=n$, which are orthogonal with respect to a measure on the real line and, in addition, are eigenfunctions of a second-order differential operator (acting on the continuous variable $x$).

The orthogonality of a sequence of polynomials  with respect to a measure on the real line is equivalent to a second-order spectral problem in the discrete variable. This result is known as Favard's Theorem and establishes that a sequence $(q_n)_n$ of polynomials, $\deg (q_n)=n$, is orthogonal with respect to a measure (not necessarily positive) on the real line if and only if it satisfies a so-called three-term recurrence relation of the form
\begin{equation}\label{ttrr}
xq_n(x)=a_nq_{n+1}(x)+b_nq_n(x)+c_nq_{n-1}(x), \quad n\ge 0,
\end{equation}
where $(a_n)_n$, $(b_n)_n$ and $(c_n)_n$ are sequences of real numbers with $c_n\not =0$, $n\ge 1$. If we write $D_n$ for the second-order difference operator (acting on the discrete variable $n$)
\begin{equation*}\label{ottrr}
D_n=a_n\Sh_1+b_n\Sh_0+c_n\Sh_{-1},
\end{equation*}
where $\Sh_l$ stands for the shift operator $\Sh_l(f(n))=f(n+l)$, the three-term recurrence relation (\ref{ttrr}) can be expressed in the spectral form as $D_n(q_n)=xq_n$.

Following the Duistermaat-Grünbaum terminology, one can say that a sequence of polynomials $(q_n(x))_n$ is bispectral if there exist a difference operator acting on the discrete variable $n$ of the form
\begin{equation}\label{doho2g}
D_n=\sum_{i=s}^r\gamma_{n,i}\Sh _i, \quad s\le r, \quad s,r\in \ZZ,
\end{equation}
where $\gamma_{n,i}$, $i=s,\ldots, r$, are sequences of numbers with $\gamma_{n,s},\gamma_{n,r}\not =0 $, $n\ge 0$, and an operator acting on the continuous variable $x$, with respect to which the polynomials $(q_n(x))_n$ are eigenfunctions. In this paper we only consider differential operators acting on the continuous variable $x$.

It is easy to see that if $D_n(q_n)=Q(x)q_n$ then $Q$ is a polynomial of degree $r$, and hence each operator $D_n$ of the form (\ref{doho2g}) produces a higher-order recurrence relation for the polynomials $(q_n)_n$, i.e.
\begin{equation}\label{qrr}
Q(x)q_n(x)=\sum_{i=s}^r\gamma_{n,i}q_{n+i}(x),\quad s\le r.
\end{equation}

Besides the classical polynomials, Krall polynomials are other well-known examples of bispectral polynomials. Krall polynomials are  eigenfunctions of higher-order differential operators. They are called Krall polynomials because they were introduced by H.L. Krall in 1940 \cite{Kr2}: Krall proved that the differential operators must have even order and classified the case of order four. Since the 1980's, Krall polynomials associated with differential operators of any even order have been constructed and intensively studied (\cite{koekoe,koe,koekoe2,L1,L2,GrH1,GrHH,GrY,Plamen1,Plamen2,Zh}; the list is not exhaustive).

Krall-Laguerre polynomials are orthogonal with respect to measures of the form
\begin{equation}\label{Kcwm}
\mu_{\alpha}^{\{b_h\}}=x^{\alpha-m}e^{-x}+\sum_{h=0}^{m-1}b_h\delta_0^{(h)},\quad x\ge 0,
\end{equation}
where $\alpha $ and $m$ are positive integers with $\alpha\geq m$ and $b_h$, $h=0,\ldots, m-1$, are certain real numbers with $b_{m-1}\not=0$. Krall-Laguerre polynomials are also eigenfunctions of a higher-order differential operator.

Other examples of bispectral polynomials are the Krall-Jacobi polynomials (see \cite{Kr2,GrH1,GrY,Plamen1,koe,koekoe2}), the Krall-Sobolev polynomials (see \cite{KKB,Ba,ddI1,ddI3}), the exceptional polynomials (see \cite{GUKM1,duch,dume,durr,duhj,GFGM}, and references therein) or the Grünbaum and Haine extension of Krall polynomials (\cite{GrH3}; see also \cite{Plamen1,du1}). In these cases, the associated operators (in the discrete and continuous variable) have order bigger than $2$.

\medskip

From the Laguerre polynomials $(L_n^\alpha)_n$, we can generate sequences of polynomials $(q_n(x))_n$ which are eigenfunctions of a higher-order differential operator (acting on the continuous variable $x$) in the following way. Consider a finite set $G=\{g_1,\ldots, g_m\}$ of positive integers (written in increasing size) and polynomials $\Rc_g$, $g\in G$, with degree of $\Rc_g$ equal to $g$. We associate to them the Casoratian determinant
\begin{equation}\label{cdt}
\Omega_G (x)=\det \left(\Rc_{g_l}(x-j)\right)_{l,j=1}^m,
\end{equation}
and assume that
\begin{equation}\label{mai}
\Omega_G(n)\not =0, \quad n=0,1,2,\ldots
\end{equation}
We then define the sequence of polynomials $(q_n)_n$ by
\begin{equation}\label{quschi}
q_n(x)=\begin{vmatrix}
L^\alpha_n(x) & L^\alpha_{n-1}(x) & \ldots & L^\alpha_{n-m}(x) \\
\Rc_{g_1}(n) & \Rc_{g_1}(n-1) & \ldots &
\Rc_{g_1}(n-m) \\
               \vdots & \vdots & \ddots & \vdots \\
\Rc_{g_m}(n) & \displaystyle
\Rc_{g_m}(n-1) & \ldots &\Rc_{g_m}(n-m)
             \end{vmatrix}.
\end{equation}
The assumption (\ref{mai}) says that the determinant on the right-hand side of (\ref{quschi}) defines a polynomial of degree $n$, $n\ge 0$. Expanding the determinant by its first row, we see that each $q_n$, $n\ge m$, is a linear combination of $m$ consecutive Laguerre polynomials.

Using the $\D$-operator method, it is proved in \cite{ddI1} (see Lemma 3.1 and Theorem 3.2) that the polynomials $(q_n)_n$ are eigenfunctions of a higher-order differential operator (acting on the continuous variable $x$) of the form
$D_x=\sum_{l=0}^rh_l(x)\left(\frac{d}{dx}\right)^l$,
where $h_l(x)$ are polynomials and $r$ is a positive even integer greater than $2$. This differential operator can, in fact, be explicitly constructed. For a different approach of the polynomials (\ref{quschi}) using discrete Darboux transformations see \cite{GrHH,Plamen2}.

The most interesting case is when $\alpha$ is a positive integer with $\alpha \ge m$, $G=\{\alpha,\alpha+1,\ldots, \alpha+m-1\}$ and for $h=1,\ldots, m$,
\begin{equation}\label{rpm}
\R_{g_h}(x)=\binom{x+\alpha+h-1}{\alpha+h-1}+(h-1)!\sum_{l=0}^{h-1}\frac{(-1)^la_{h-l-1}}{(\alpha-l)_l}\binom{x+l}{l},
\end{equation}
where $a_l$, $l=0,\ldots, m-1$, are real numbers with $a_0\not=0$.
Then the polynomials $(q_n)_n$ are orthogonal with respect to the Krall-Laguerre measure $\mu_\alpha^{\{b_h\}}$ (\ref{Kcwm}) (for certain choice of the parameters $b_0,\ldots, b_{m-1}$).

\medskip

As the main results of this paper, we first prove that for any set of polynomials $\Rc_g$, $g\in G$, with $\deg \Rc_g=g$, satisfying (\ref{mai}), the polynomials $(q_n)_n$ (\ref{quschi}) are bispectral. And second, we also prove that the polynomials (\ref{rpm}) are essentially the only ones for which the sequence $(q_n)_n$ satisfies a three-term recurrence relation (and therefore they are orthogonal with respect to a measure). 

\medskip

The content of the paper is as follows. After some preliminaries in Section 2, in Section 3, we find some orthogonality properties for the polynomials $(q_n)_n$, when $\alpha-\max G\not =0,-1,-2,\ldots $. For that we will modify the Laguerre weight   with a nonsymmetric perturbation (which strongly depends on the polynomials $\Rc_g(x)$). These orthogonality properties allow us to prove in Section 4 that the sequence $(q_n)_n$ satisfies some recurrence relations of the form (\ref{qrr}) where $s=-r$ (we also prove that (\ref{qrr}) always holds with $Q(x)=x^{\max G+1}$, although in some particular cases, a polynomial of lower degree can be found). On the other hand, the orthogonality properties constrain the number of terms of these recurrence relations: in particular, we prove in Section 4 that the sequence $(q_n)_n$ can never satisfy a three-term recurrence relation of the form (\ref{ttrr}).  We also prove some results for the algebra of operators $\al_n$. This algebra is defined as follows. We denote by $\A_n$ the algebra formed by all higher-order difference operators (acting on the variable $n$) of the form (\ref{doho2g}). Then we define
\begin{equation}\label{algo}
\al _n=\{D_n\in \A_n: D_n(q_n)=Q(x)q_n,\; Q\in\RR[x]\},
\end{equation}
where $\RR[x]$ denotes the linear space of real polynomials in the unknown $x$. This algebra is actually characterized by the algebra of polynomials defined from the corresponding eigenvalues
\begin{equation}\label{algp}
\alp _n=\{Q\in\RR[x]: \mbox{there exists $D_n\in \al_n$ such that $D_n(q_n)=Q(x)q_n$}\}.
\end{equation}
In Section 4 we prove that when $G$ is a segment, i.e. its elements are consecutive positive integers, the algebra $\alp_n$ has a simple estructure:
$$
\alp_n=\{p(x)x^{\max G+1}+c:p\in \RR[x],c\in \RR\}.
$$
For a characterization of the corresponding algebra for the Charlier-type polynomials see \cite{ducb}. We also give some examples showing that, in general, this algebra can have a more complicated structure (see \cite{Plamen2} for a characterization of the algebra of differential operators acting on the variable $x$ associated to the Krall-Laguerre polynomials orthogonal with respect to the measure (\ref{Kcwm})).

In Section 5 we study the case when $\alpha=1,\ldots, \max G$ (which includes the Krall-Laguerre polynomials orthogonal with respect to (\ref{Kcwm})). In order to get orthogonality properties, we have to transform a portion of the bilinear form studied in Section 4 in a discrete Sobolev part. As the main result here, we prove that the sequence $(q_n)_n$ satisfies a three-term recurrence relation of the form (\ref{ttrr}) only when $\{\R_g(x)\}_{g\in G}$ has the form (\ref{rpm}) and hence, this is the only possible choice such that the polynomials $(q_n)_n$ are orthogonal with respect to a measure.

The case of the Jacobi type polynomials is more involved since they are defined from two sets of positive integers $G$ and $H$ and two families of polynomials $\Rc_g$, $g\in G$, $\mathcal S_h$, $h\in H$. We guess that they should also be bispectral but the proof remains as a challenge.

\section{Preliminaries}
For $\alpha\in\mathbb{R}$, we use the standard definition of the Laguerre polynomials $(L_n^{\alpha})_n$:
\begin{equation*}\label{deflap}
L_n^{\alpha}(x)=\sum_{j=0}^n\frac{(-x)^j}{j!}\binom{n+\alpha}{n-j}.
\end{equation*}
For $\alpha\neq-1,-2,\ldots$, they are orthogonal with respect to a measure which we denote by $\mu_{\alpha}=\mu_{\alpha}(x)dx$ (normalized so that
$\int_0^\infty d\mu_{\alpha}(x)=\Gamma(\alpha+1))$. This measure is positive only when $\alpha>-1$ and then $\mu_{\alpha}=x^\alpha e^{-x}$.

We consider a finite set $G=\{g_1,\ldots, g_m\}$ of $m$ positive integers (written in increasing size: $g_i<g_j$) and polynomials $\Rc_g$, $g\in G$, with $\deg \Rc_g=g$. Since the leading coefficients of the polynomials $\Rc_g$, $g\in G$, just produce a renormalization of the polynomials $(q_n)_n$ (\ref{quschi}), we assume along this paper that
\begin{equation*}\label{lcqn}
\Rc_g(x)=\frac{1}{g!}x^g+\mbox{terms of lower degree}.
\end{equation*}
We assume that the Casoratian determinant $\Omega_G$ (\ref{cdt}) satisfies $\Omega_G (n)\not =0$, $n\ge 0$.

\begin{lemma}\label{lepm} There exist numbers $\kappa _i^g$, $i=0,\ldots, m-1$, $g\in G$, such that for $i=0,\ldots, m-1,$ we have
\begin{align}\label{epm1}
\sum_{g\in G}\kappa_i^g\Rc_g(-j)&=0,\quad j=1,\ldots, m-1-i,\\\label{epm2}
\sum_{g\in G}\kappa_i^g\Rc_g(-m+i)&\not =0.
\end{align}
\end{lemma}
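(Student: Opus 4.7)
The plan is to reduce the statement to a square linear system and read off the coefficients $\kappa_i^g$ from Cramer's rule. The only input I need is the $n=0$ instance of the standing assumption (\ref{mai}): evaluating the Casoratian (\ref{cdt}) at $x=0$ gives
$$\Omega_G(0)=\det\bigl(\Rc_{g_l}(-j)\bigr)_{l,j=1}^{m}\neq 0,$$
so the $m\times m$ matrix $M:=(\Rc_{g_l}(-j))_{l,j=1}^{m}$ is invertible. Once this is in hand the whole lemma is pure linear algebra.

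The key step is then the following. For each $i\in\{0,\ldots,m-1\}$, I would define $\kappa_i=(\kappa_i^{g_1},\ldots,\kappa_i^{g_m})\in\RR^m$ as the unique solution of the square linear system
$$\sum_{g\in G}\kappa_i^g\,\Rc_g(-j)=\delta_{j,\,m-i},\qquad j=1,\ldots,m,$$
which in matrix form reads $M^{T}\kappa_i=e_{m-i}$, where $e_{m-i}$ is the standard basis vector with a $1$ in the $(m-i)$-th position and $\delta$ is the Kronecker symbol. Existence and uniqueness of $\kappa_i$ are immediate from $\det M\neq 0$. Since $m-1-i<m-i$, the equations indexed by $j=1,\ldots,m-1-i$ of this system are exactly (\ref{epm1}), while the equation indexed by $j=m-i$ gives $\sum_{g\in G}\kappa_i^g\Rc_g(-m+i)=1\neq 0$, which is (\ref{epm2}). (The remaining equations, for $j>m-i$, are not demanded by the lemma but are automatically produced by this construction.)

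The main obstacle is essentially non-existent: the entire content of the lemma is a cofactor-type consequence of the hypothesis (\ref{mai}) at $n=0$. A more coordinate-free rephrasing of the same argument would observe that the columns $v_j:=(\Rc_{g_1}(-j),\ldots,\Rc_{g_m}(-j))$, $j=1,\ldots,m$, of $M$ form a basis of $\RR^m$, so $v_{m-i}$ does not lie in the span $V_i$ of $v_1,\ldots,v_{m-1-i}$; any $\kappa_i$ chosen in the orthogonal complement $V_i^{\perp}$ (which encodes (\ref{epm1})) but outside the kernel of $\langle\,\cdot\,,v_{m-i}\rangle$ (which encodes (\ref{epm2})) then works. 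I would nevertheless prefer the explicit Cramer-style formulation above, because it expresses each $\kappa_i^g$ as a ratio of cofactors of the evaluation matrix $M$, which is likely to be convenient if later parts of the paper need concrete formulas for the $\kappa_i^g$.
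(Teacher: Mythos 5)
Your proof is correct and rests on exactly the same input as the paper's: the nonvanishing of $\Omega_G(0)$, which makes the evaluation matrix $(\Rc_{g_l}(-j))_{l,j=1}^m$ invertible. The paper simply observes that the $(m-i)\times m$ coefficient matrix of (\ref{epm1})--(\ref{epm2}) has full rank (its rows being the first $m-i$ columns of that Casoratian matrix), whereas you complete the system to a square one and solve $M^{T}\kappa_i=e_{m-i}$; this is the same argument, just with a particular solution singled out.
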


\begin{proof}
For each $i$, we can see (\ref{epm1}) and (\ref{epm2}) as a system of $m-i$ linear equations in the $m$ unknowns $\kappa_i^g$, $g\in G$. The rows of the $(m-i)\times m$ coefficient matrix coincide with the $m-i$ first columns of the Casoratian matrix $(\Rc_g(-j))_{\begin{subarray}{1} g\in G,\\
j=1,\ldots, m\end{subarray}}$. The determinant of this Casoratian matrix is $\Omega_G(0)\not =0$ (\ref{cdt}). Therefore the coefficient matrix has full rank $m-i$, and the linear system of equations has always a solution.
\end{proof}

We associate to $G$ and $\{\Rc_g\}_{g\in G}$ the sequence of polynomials $(q_n)_n$ defined by (\ref{quschi}).

\begin{remark}\label{r2.1}
We stress that if we substitute the polynomials $\Rc_g$ in the determinant (\ref{quschi}) by any linear combination $R_g$ of the form
\begin{equation}\label{ocl}
R_g=\Rc_g+\sum_{\tilde g\in G;\tilde g<g}\zeta_{g,\tilde g}\Rc_{\tilde g}
\end{equation}
the polynomials $q_n$, $n\ge 0$, remain invariant. Notice that $\deg R_g=\deg \Rc_g=g$, and the leading coefficient of $R_g$ is again $1/g!$.
\end{remark}
We set $\R_g$, $g\in G$, for the (unique) linear combination of the form (\ref{ocl}) such that
\begin{align}\label{aqj}
&\mbox{the powers $x^{\tilde g}$, $\tilde g<g, \tilde g\in G$, do not appear}\\\nonumber
&\mbox{in the expansion of $\R_g$ in powers of $x$}.
\end{align}
The set of polynomials $\{\R_g\}_{g\in G}$ is called the \emph{reduced representation} of $(q_n)_n$.

\medskip
As we wrote in the Introduction, the most interesting case is when $\alpha$ is a positive integer with $\alpha\ge m$, $G=\{\alpha,\alpha+1,\ldots, \alpha+m-1\}$ and $\R_{g_h}$, $h=1,\ldots, m$, defined by (\ref{rpm}). The polynomials $(q_n)_n$ are then orthogonal with respect to the measure (\ref{Kcwm}). In \cite[(1.3) and Example 1, p. 86]{ddI1}, we represent $(q_n)_n$ with a different set of polynomials $\{R_g\}_{g\in G}$, from where the reduced representation $\{\R_g\}_{g\in G}$ (\ref{rpm}) can be easily obtained. If
$1\le \alpha\le m-1$ and if we take
\begin{equation}\label{rpm1}
\R_{g_h}(x)=\binom{x+\alpha+h-1}{\alpha+h-1}+(h-1)!\sum_{l=0}^{h+\alpha-m-1}\frac{(-1)^l\tilde a_{h-l-1}}{(\alpha-l)_l}\binom{x+l}{l},
\end{equation}
then the polynomials $(q_n)_n$ satisfies also three-term recurrence relations (along this paper, we always take $\sum_{l=u}^{v}\rho_l=0$ for $u>v$). But these polynomials $(q_n)_n$ are somehow degenerated. Indeed, since \begin{equation}\label{dlp}
(L_n^\alpha )^{(j)}(0)=(-1)^j\binom{n+\alpha}{\alpha + j},
\end{equation}
it is not difficult to see that
$q_n^{(j)}(0)=0$, $j=0,\ldots , m-\alpha-1$, $n\ge m-\alpha$. Moreover, the polynomials $(q_{n+m-\alpha}(x)/x^{m-\alpha})_{n\ge m-\alpha}$ are particular examples of the case (\ref{rpm}) replacing $\alpha$ by $m$, $m$ by $\alpha$ and writing
$a_j=(-1)^{\alpha-m}\frac{(\alpha-1)!}{(m-1)!}\tilde a_{\alpha-m+j}$, $j=0,\ldots, \alpha-1$ (notice that only the parameters $\tilde a_j$, $j=m-\alpha,\ldots , m-1$, appear in (\ref{rpm1})).

\medskip

We will use the following alternative definition of the polynomials $(q_n)_n$ (\ref{quschi}).
For $j=0,\ldots, m$, and $\{R_g\}_{g\in G}$ as in (\ref{ocl}), the sequence $\beta _{n,j}$ is defined by
\begin{equation}\label{defb}
\beta_{n,j}=(-1)^j\det (R_g(n-i))_{\begin{subarray}{1} g\in G,\\
i=0,\ldots, m,i\not =j\end{subarray}}.
\end{equation}
Notice that
\begin{equation*}\label{nnc}
\beta_{n,m}=(-1)^m\Omega_G(n+1)\not =0,
\end{equation*}
as a consequence of (\ref{mai}). By expanding the determinant (\ref{quschi}) by its first row (writing $L_{u}^\alpha(x)=0$ for $u<0$), we get for the polynomial $q_n$ the expansion
\begin{equation*}\label{adefqn}
q_n(x)=\sum_{j=0}^{m\wedge n}\beta_{n,j}L_{n-j}^\alpha(x).
\end{equation*}
On the other hand, for $g\in G$, we have
$$
\det\begin{pmatrix}R_g(n)&\ldots &R_g(n-m)\\
R_{g_1}(n)&\ldots &R_{g_1}(n-m)\\
\vdots&\ddots&\vdots\\
R_{g_m}(n)&\ldots &R_{g_m}(n-m)\end{pmatrix}=0,
$$
because the matrix has two equal rows. Expanding it by its first row, we get
\begin{equation}\label{defbetas}
\sum_{j=0}^{m}\beta_{n,j}R_g(n-j)=0,\quad g\in G.
\end{equation}
We point out that the coefficients $\beta_{n,j}$ are invariant under a substitution of the polynomials $\Rc_g$ in the determinant (\ref{quschi}) by any linear combination of the form (\ref{ocl}).

\medskip

Given polynomials $p_i$, $i=1,\ldots,s$, we define the Casoratian determinant
\begin{equation}\label{defcd}
\Phi_s(x)\equiv\Phi ^{p_1,\ldots,p_s}(x)=\det(p_i(x-j))_{\begin{subarray}{1} i=1,\ldots, s,\\
j=0,\ldots, s-1\end{subarray}}.
\end{equation}
It is not difficult to see that
\begin{equation}\label{gcd}
\deg \Phi_s(x)\begin{cases}\displaystyle=\sum_{i=1}^s\deg p_i-\binom{s}{2}, & \deg p_i\not =\deg p_j, i\not =j,\\
\displaystyle<\sum_{i=1}^s\deg p_i-\binom{s}{2}, &\mbox{otherwise}.\end{cases}
\end{equation}

We will also need the following combinatorial formula: if $\alpha ,k,l$ are nonnegative integers with $l\geq\alpha+k$, then
\begin{equation}\label{fcc}
\sum_{j=0}^{l-\alpha-k}(-1)^{j}\binom{\alpha+j+k-l-1}{j}\binom{\alpha+u}{\alpha+j}=
\binom{u+l-k}{l-k}.
\end{equation}

\section{Orthogonality properties}
In this section we establish some orthogonality properties for the polynomials $(q_n)_n$, from which we will deduce in the next section the recurrence relations for them.
We associate to the polynomials $\Rc_g$, $g\in G$, the rational functions $U_g^i$, $i=0,\ldots, m-1$, $g\in G$, defined as follows.
Given a  polynomial $\Rc_g$ of degree $g$ and leading coefficient equal to $1/g!$, we can always write
\begin{equation}\label{defr}
\Rc_g(x)=\sum_{l=0}^{g} w_l^g\binom{x+l}{l},
\end{equation}
for certain numbers $w_l^g$, $l=0,\ldots , g$, with $w_g^g=1$ (which are uniquely determined from the polynomial $\Rc_g$).
For each $i=0,\ldots, m-1$, we define the rational functions $U_i^g$ and $\U_i$ as
\begin{align}
\nonumber U_i^g(x)&=\frac{-x^{-m+i}}{m}+\kappa_i^g\sum_{l=0}^g(\alpha-l)_l w_l^g x^{-l-1},\\
\label{defu2}\U_i(x)&=\sum_{g\in G}U_i^g(x),
\end{align}
where the numbers $\kappa_i^g$, $g\in G$, satisfy (\ref{epm1}) and (\ref{epm2}), and $(a)_n=a(a+1)\ldots(a+n-1)$ denotes the Pochhammer symbol.

Taking into account that $\Rc_g(-j)=\sum_{l=0}^{j-1}w_l^g\binom{-j+l}{l}$, $j=1,\ldots, m-1$, (\ref{epm1}), (\ref{epm2}) and after some computations, we deduce the following alternative representation of $\U_i$
\begin{equation}\label{defu3}
\U_i(x)=-x^{-m+i}+\sum_{l=m-i-1}^{\max G}(\alpha-l)_lx^{-l-1}\sum_{g\in G;g\ge l}\kappa_i^gw_l^g.
\end{equation}

For  $\alpha\in \RR$ with $\alpha-\max G\not =0,-1,-2,\ldots $, we consider the following bilinear form (which is nonsymmetric in general):
\begin{equation}\label{innlag}
\langle p,q\rangle=\langle p,q\rangle_1+\langle p,q\rangle_2,
\end{equation}
where
\begin{align}\label{ip1}
\langle p,q\rangle_1&=\int_0^\infty p(x)q(x)\mu_{\alpha-m}dx,
\end{align}
with $\mu_{\alpha-m}$ denoting the Laguerre weight (with parameter $\alpha-m$) and
\begin{align}\label{innlag2}
\langle p,q\rangle_2&=\sum_{i=0}^{m-1}\frac{q^{(i)}(0)}{i!}\int_0^\infty p(x)\U_i(x)\mu_{\alpha}dx,
\end{align}
where $\U_i$ is defined by \eqref{defu2}. The following lemma will be the key for most of our results.

\begin{lemma}\label{lsc} For $k,u\ge 0$, with $u\ge k$, and $0\le i\le m-1$, we have
\begin{equation*}\label{fsc}
\langle x^kL_u^\alpha (x),x^i\rangle =\Gamma(\alpha)\sum_{g\in G}\kappa _i^g \sum _{l=k}^g(\alpha-l)_kw_l^g\binom{u+l-k}{l-k}.
\end{equation*}
\end{lemma}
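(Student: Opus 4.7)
The plan is to split the bilinear form into its two pieces $\langle\cdot,\cdot\rangle_1$ and $\langle\cdot,\cdot\rangle_2$, exploit a designed-in cancellation between them, and then identify what remains as a standard Laguerre moment.

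First I would observe that since $q(x)=x^i$ with $0\le i\le m-1$, only the $j=i$ summand in (\ref{innlag2}) survives, so
$$\langle x^k L_u^\alpha,x^i\rangle_2=\sum_{g\in G}\int_0^\infty x^kL_u^\alpha(x)U_i^g(x)x^\alpha e^{-x}dx.$$
Inserting the explicit form of $U_i^g$ and separating the $g$-independent piece $-x^{-m+i}/m$ from the $\kappa_i^g$-weighted tail, the first piece summed over the $m$ elements of $G$ collapses to $-x^{-m+i}$, and its integral against $\mu_\alpha$ equals $-\langle x^kL_u^\alpha,x^i\rangle_1$. This is the designed cancellation that motivated the term $-x^{-m+i}/m$ in the definition of $U_i^g$; after it the full bilinear form reduces to
$$\langle x^k L_u^\alpha,x^i\rangle=\sum_{g\in G}\kappa_i^g\sum_{l=0}^g (\alpha-l)_l\,w_l^g\int_0^\infty x^{k+\alpha-l-1}e^{-x}L_u^\alpha(x)dx.$$

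Next I would evaluate the Laguerre moment $I_{k,l}=\int_0^\infty x^{k+\alpha-l-1}e^{-x}L_u^\alpha(x)dx$. The cleanest derivation uses the generating function $\sum_u L_u^\alpha(x)t^u=(1-t)^{-\alpha-1}\exp(-xt/(1-t))$, which yields
$$\int_0^\infty x^{s-1}e^{-x}L_u^\alpha(x)dx=\Gamma(s)\binom{\alpha+u-s}{u};$$
with $s=k+\alpha-l$ this becomes $I_{k,l}=\Gamma(k+\alpha-l)\binom{l-k+u}{u}$. Writing $\binom{l-k+u}{u}=(l-k+1)_u/u!$ and using the running hypothesis $u\ge k$, the Pochhammer factor contains a zero for every $0\le l<k$, so those terms drop out. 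For $l\ge k$ the two Gamma identities $(\alpha-l)_l=\Gamma(\alpha)/\Gamma(\alpha-l)$ and $\Gamma(k+\alpha-l)=(\alpha-l)_k\Gamma(\alpha-l)$ combine to $(\alpha-l)_l\,\Gamma(k+\alpha-l)=\Gamma(\alpha)(\alpha-l)_k$, and assembling everything reproduces exactly the right-hand side of the lemma.

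I do not anticipate a serious obstacle: the argument is essentially a one-line cancellation followed by a textbook Laguerre-moment evaluation. The only mildly delicate point is the meaning of $I_{k,l}$ when $\alpha-l$ is close to a non-positive integer, which is precisely forbidden by the hypothesis $\alpha-\max G\ne 0,-1,-2,\ldots$; outside the region of absolute convergence the identity is interpreted by analytic continuation in $\alpha$.
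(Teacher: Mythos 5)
Your proof is correct and follows essentially the same route as the paper: isolate the $j=i$ term of $\langle\cdot,\cdot\rangle_2$, use the built-in cancellation of the $-x^{-m+i}/m$ terms against $\langle\cdot,\cdot\rangle_1$, and reduce everything to the moments $\int_0^\infty x^{s-1}e^{-x}L_u^\alpha(x)\,dx$. The only cosmetic difference is that you evaluate these moments via the generating function, whereas the paper uses the connection formula $L_n^\alpha=\sum_j\frac{(\alpha-\beta)_j}{j!}L_{n-j}^\beta$ together with orthogonality; both yield $\Gamma(s)\binom{\alpha+u-s}{u}$ and the same final simplification.
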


\begin{proof}
Using the expansion $L_{n}^{\alpha}=\sum_{j=0}^n\frac{(\alpha-\beta)_j}{j!}L_{n-j}^{\beta}$
(see \cite[p. 192, (39)]{EMOT}),  we get
$$
\int_0^\infty L_{n}^{\alpha}\mu_{\alpha -l}(x)dx=\Gamma(\alpha-l+1)\binom{n+l-1}{l-1},
$$
where, as usual, for a negative integer $b$ and a real number $a>0$, we take $\binom{a}{b}=0$.
The lemma follows now from the following computations:
\begin{align*}
\langle x^kL_u^\alpha (x),x^i\rangle _2&=\int_0^\infty x^kL_u^\alpha (x)\U_i(x)\mu_{\alpha}dx\\\nonumber
&=-\int_0^\infty x^kL_u^\alpha (x)\mu_{\alpha-m+i}dx
+\sum_{g\in G}\kappa_i^g\sum_{l=0}^g(\alpha-l)_lw_l^g\int_0^\infty x^kL_u^\alpha (x)\mu_{\alpha-l-1}dx\\\nonumber
&=-\int_0^\infty L_u^\alpha (x)\mu_{\alpha-m+k+i}dx
+\sum_{g\in G}\kappa_i^g\sum_{l=0}^g(\alpha-l)_lw_l^g\int_0^\infty L_u^\alpha (x)\mu_{\alpha+k-l-1}dx\\\nonumber
&=-\langle x^kL_u^\alpha (x),x^i\rangle _1+\sum_{g\in G}\kappa_i^g\sum_{l=0}^g(\alpha-l)_lw_l^g\Gamma(\alpha+k-l)\binom{u+l-k}{l-k}\\\nonumber
&=-\langle x^kL_u^\alpha (x),x^i\rangle _1+\Gamma(\alpha)\sum_{g\in G}\kappa_i^g\sum _{l=k}^g(\alpha-l)_kw_l^g\binom{u+l-k}{l-k}.
\end{align*}
Observe that the result holds without any assumptions on the numbers $\kappa_i^g$.
\end{proof}

We are now ready to prove the main result in this section.

\begin{theorem} \label{occ} Let $\alpha$ be a real number with $\alpha-\max G\not =0,-1,-2,\ldots $. Assume
that conditions (\ref{mai}) hold and take numbers $\kappa _i^g$, $i=0,\ldots, m-1$, $g\in G$, as in Lemma \ref{lepm}. Then, for $n\geq0$, the polynomials $(q_n)_n$ satisfy the following orthogonality properties with respect to the bilinear form (\ref{innlag}):
\begin{align*}
\langle q_n,q_i\rangle &=0, \quad i=0,\ldots ,n-1,\\
\langle q_n,q_n\rangle &\not=0.
\end{align*}
\end{theorem}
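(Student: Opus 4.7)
The plan is to test $q_n$ against the monomials $x^k$, $k=0,\ldots,n$, on the right-hand slot of the bilinear form. Since $\{q_i\}_{i=0}^{n-1}$ and $\{1,x,\ldots,x^{n-1}\}$ span the same space of polynomials of degree below $n$ (by (\ref{mai}), $\deg q_n=n$), it suffices to show $\langle q_n,x^k\rangle=0$ for $k=0,\ldots,n-1$ and $\langle q_n,x^n\rangle\neq 0$. I would expand $q_n=\sum_{j=0}^{m}\beta_{n,j}L_{n-j}^\alpha$ (with the convention $L_u^\alpha=0$ when $u<0$) and split the argument according to whether the test monomial $x^k$ has $k<m$ or $k\ge m$.

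For $0\le k\le m-1$, Lemma \ref{lsc} (with trivial multiplier $x^0$) yields $\langle L_u^\alpha,x^k\rangle=\Gamma(\alpha)\sum_{g\in G}\kappa_k^g\Rc_g(u)$ for every $u\ge 0$, hence $\langle q_n,x^k\rangle=\Gamma(\alpha)\sum_g\kappa_k^g\sum_{j=0}^{n\wedge m}\beta_{n,j}\Rc_g(n-j)$. When $n\ge m$ the inner summation extends to $\{0,\ldots,m\}$ and vanishes by (\ref{defbetas}), which we apply with $R_g=\Rc_g$ (legitimate because of the invariance of $\beta_{n,j}$ under the substitutions (\ref{ocl}) recorded in the preliminaries). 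When $n<m$, (\ref{defbetas}) instead gives $\sum_{j=0}^{n}\beta_{n,j}\Rc_g(n-j)=-\sum_{j=n+1}^{m}\beta_{n,j}\Rc_g(n-j)$; interchanging the sums over $g$ and $j$, the inner coefficient $\sum_g\kappa_k^g\Rc_g(n-j)$ vanishes by (\ref{epm1}) as long as $j-n\le m-1-k$, which covers the whole range $j\le m$ once $k\le n-1$.

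For $m\le k\le n-1$ the Sobolev-type part $\langle q_n,x^k\rangle_2$ is automatically zero since the derivatives of $x^k$ of order less than $m$ all vanish at the origin, so only $\int_0^\infty q_n(x)x^k\mu_{\alpha-m}(x)\,dx$ remains. Rewriting $x^k\mu_{\alpha-m}=x^{k-m}\mu_\alpha$ and invoking the classical orthogonality of $(L_n^\alpha)_n$, the integrals $\int_0^\infty x^{k-m}L_{n-j}^\alpha\mu_\alpha\,dx$ vanish whenever $k-m<n-j$, i.e. for every $j\le m$ provided $k\le n-1$. The same reasoning handles the diagonal case $k=n\ge m$ except that the term $j=m$ now survives; the evaluation $\int_0^\infty x^{n-m}L_{n-m}^\alpha\mu_\alpha\,dx=(-1)^{n-m}\Gamma(\alpha-m+n+1)$ produces $\langle q_n,x^n\rangle=(-1)^n\Omega_G(n+1)\Gamma(\alpha-m+n+1)$, which is nonzero by (\ref{mai}) and by the hypothesis $\alpha-\max G\notin\{0,-1,-2,\ldots\}$ (which keeps $\alpha-m+n+1$ out of the poles of $\Gamma$). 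In the remaining diagonal case $k=n<m$, Lemma \ref{lsc} is still available with $i=n\le m-1$, and the reduction used for small $k$ now leaves exactly one surviving term, at $j=m$: $\langle q_n,x^n\rangle=-\Gamma(\alpha)\beta_{n,m}\sum_g\kappa_n^g\Rc_g(n-m)$, nonzero by (\ref{mai}) and (\ref{epm2}).

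The main technical obstacle is the mismatch between the range of validity of Lemma \ref{lsc} (which requires $u=n-j\ge 0$) and the recurrence (\ref{defbetas}) (which involves all $j=0,\ldots,m$); the structural identities (\ref{epm1})--(\ref{epm2}) of Lemma \ref{lepm} are precisely what bridge this gap, forcing the ``missing tail'' $j\in\{n+1,\ldots,m-1\}$ to cancel and isolating a single nonzero residue at $j=m$. A minor but indispensable checkpoint is verifying that the $\beta_{n,j}$ appearing in the expansion of $q_n$ (defined from the $\Rc_g$) coincide with those satisfying (\ref{defbetas}) for $R_g=\Rc_g$, which is exactly the substitution invariance under (\ref{ocl}).
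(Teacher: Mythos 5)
Your proposal is correct and follows essentially the same route as the paper: reduce to $\langle q_n,x^k\rangle$, apply Lemma \ref{lsc} with $k=0$ together with \eqref{defbetas} for the exponents below $m$ (with the tail argument via \eqref{epm1}--\eqref{epm2} when $n<m$), and fall back on classical Laguerre orthogonality of the pure integral part for exponents $\ge m$. The only difference is that you make explicit the value $(-1)^n\Omega_G(n+1)\Gamma(\alpha-m+n+1)$ of the diagonal term for $n\ge m$, which the paper leaves implicit; both verifications are sound.
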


\begin{proof}
Assume first that $i\ge m$. Then
$$
\langle q_n,x^i\rangle =\langle q_n,x^i\rangle _1=\int_0^\infty q_n(x)\mu_{\alpha-m+i}dx
\begin{cases}=0, &m\le i\le n-1,\\
\not =0, &m\le i=n.
\end{cases}
$$
Let us now assume $i\le m-1$. Lemma \ref{lsc} for $k=0$ and (\ref{defr}) gives
$$
\langle L_u^\alpha (x),x^i\rangle =\Gamma(\alpha)\sum_{g\in G}\kappa_i^g\Rc_g(u).
$$
Hence for $n\ge m$ and using \eqref{defbetas} we have
\begin{align*}
\langle q_n(x),x^i\rangle &=\sum_{j=0}^m\beta_{n,j}\langle L_{n-j}^\alpha (x),x^i\rangle =
\sum_{j=0}^m\beta_{n,j}\Gamma(\alpha)\sum_{g\in G}\kappa_i^g\Rc_g(n-j)\\
&=\Gamma(\alpha)\sum_{g\in G}\kappa_i^g\sum_{j=0}^m\beta_{n,j}\Rc_g(n-j)=0.
\end{align*}
Notice that at this point we have not used yet any assumptions on the numbers $\kappa_i^g$.

For $0\le n\le m-1$, we have
\begin{align*}
\langle q_n(x),x^i\rangle &=\Gamma(\alpha)\sum_{g\in G}\kappa_i^g\sum_{j=0}^n\beta_{n,j}\Rc_g(n-j)\\
&=-\Gamma(\alpha)\sum_{g\in G}\kappa_i^g\sum_{j=n+1}^m\beta_{n,j}\Rc_g(n-j)\\
&=-\Gamma(\alpha)\sum_{j=1}^{m-n}\beta_{n,n+j}\sum_{g\in G}\kappa_i^g\Rc_g(-j).
\end{align*}
Hence for $0\le i\le n-1$, it follows from (\ref{epm1}) that $\langle q_n(x),x^i\rangle=0$. If $i=n$,
(\ref{epm2}) gives
$$
\langle q_n(x),x^n\rangle=-\Gamma(\alpha)\beta_{n,m}\sum_{g\in G}\kappa_n^g\Rc_g(-m+n)\not=0.
$$
\end{proof}

\begin{corollary}\label{cor1} With no assumptions on the real numbers $\kappa_j^g$, $j=0,\ldots, m-1$, $g\in G$, we always have
$\langle q_n(x),x^i\rangle=0$, $n\ge m, 0\le i\le n-1$.
\end{corollary}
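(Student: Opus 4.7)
The proof is essentially a bookkeeping exercise on top of the proof of Theorem \ref{occ}: one just needs to check that both sub-cases that appear when $n\ge m$ and $0\le i\le n-1$ go through without invoking the defining properties \eqref{epm1}--\eqref{epm2} of the $\kappa_i^g$. So my plan is to split the range of $i$ as in Theorem \ref{occ} and audit where the $\kappa_i^g$ are actually used.

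\textbf{Range $m\le i\le n-1$.} Here I would note that $(x^i)^{(j)}(0)=0$ for all $j=0,\ldots,m-1$, so the discrete part $\langle q_n,x^i\rangle_2$ in \eqref{innlag2} vanishes term by term, no matter what the numbers $\kappa_j^g$ are (the coefficients $\U_j$ never enter because they are multiplied by $(x^i)^{(j)}(0)/j!$). Thus $\langle q_n,x^i\rangle=\langle q_n,x^i\rangle_1=\int_0^\infty q_n(x)\mu_{\alpha-m+i}(x)\,dx$, and this integral is zero by the Laguerre-integral computation already used inside the proof of Theorem \ref{occ} (expanding $q_n=\sum_{j=0}^m\beta_{n,j}L_{n-j}^\alpha$ and using $\int_0^\infty L_u^\alpha \mu_{\alpha-m+i}\,dx=\tfrac{(m-i)_u}{u!}\Gamma(\alpha-m+i+1)$, which vanishes term by term in the range considered).

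\textbf{Range $0\le i\le m-1$.} This is exactly the step in the proof of Theorem \ref{occ} that already contains the parenthetical remark "\emph{at this point we have not used yet any assumptions on the numbers $\kappa_i^g$}". I would reproduce it: apply Lemma \ref{lsc} with $k=0$ to obtain $\langle L_u^\alpha,x^i\rangle=\Gamma(\alpha)\sum_{g\in G}\kappa_i^g\Rc_g(u)$, plug in the expansion $q_n=\sum_{j=0}^m\beta_{n,j}L_{n-j}^\alpha$ (valid because $n\ge m$, so the full range $j=0,\dots,m$ appears) and swap the summation order to get
\[
\langle q_n(x),x^i\rangle=\Gamma(\alpha)\sum_{g\in G}\kappa_i^g\sum_{j=0}^m\beta_{n,j}\Rc_g(n-j)=0,
\]
the inner sum being zero by \eqref{defbetas}. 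This cancellation is structural: it depends only on the definition \eqref{defb} of the $\beta_{n,j}$, not on any linear relation imposed on the $\kappa_i^g$.

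Putting the two ranges together covers $0\le i\le n-1$, yielding the corollary. I do not expect any real obstacle; the only subtle point is to make explicit that, when $n\ge m$, the fallback to \eqref{epm1}--\eqref{epm2} used in Theorem \ref{occ} for handling small $n$ is never invoked, so the hypotheses on $\kappa_i^g$ from Lemma \ref{lepm} become unnecessary.
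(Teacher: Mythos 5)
Your proposal is correct and follows essentially the same route as the paper: the corollary is exactly the observation that the two sub-cases $m\le i\le n-1$ and $0\le i\le m-1$ (for $n\ge m$) in the proof of Theorem \ref{occ} never invoke \eqref{epm1}--\eqref{epm2}, relying only on the Laguerre orthogonality for the first range and on \eqref{defbetas} for the second. Your audit of both ranges, including the vanishing of $\langle q_n,x^i\rangle_2$ for $i\ge m$, matches the paper's argument.
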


\section{Recurrence relations}\label{lrr}
In this section we prove one of the main results of this paper.

\begin{theorem}\label{t4.1} Let $\alpha$ be a real number with $\alpha-\max G\not =0,-1,-2,\ldots $.
For any $p\in \RR[x]$ the sequence $(q_n)_n$ satisfies the recurrence relation
$$
x^{\max G+1}p(x)q_n(x)=\sum_{j=-s}^s\gamma_{n,j}q_{n+j}(x),\quad \gamma_{n,s}, \gamma_{n,-s}\not =0,
$$
where $s=\deg p+\max G+1$.
\end{theorem}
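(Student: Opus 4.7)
The plan is to expand $Q(x)q_n(x)$, with $Q(x)=x^{\max G+1}p(x)$ of degree $s$, in the basis $\{q_k\}_{k\ge 0}$ of $\RR[x]$ and use the one-sided orthogonality from Theorem~\ref{occ} to force the expansion to be supported in $\{k:n-s\le k\le n+s\}$. Because $\langle q_k,q_i\rangle=0$ whenever $i<k$ while $\langle q_k,q_k\rangle\ne 0$, this triangular structure gives the implication: if $\langle Qq_n,x^i\rangle=0$ for every $i\le n-s-1$, then the coefficients of $q_k$ with $k<n-s$ in $Qq_n$ all vanish. The central task is therefore to verify these pairings vanish in that range.

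To check $\langle Qq_n,x^i\rangle=0$, I split it into the two pieces of the bilinear form. For the symmetric integral part $\langle Qq_n,x^i\rangle_1$, pushing $Q$ to the second slot gives $\langle q_n,x^{\max G+1+i}p\rangle_1$; the polynomial $x^{\max G+1+i}p$ has degree $s+i<n$ and is divisible by $x^{\max G+1}$, hence by $x^m$ (since $\max G\ge m$), so its derivatives of orders $0,\dots,m-1$ at $0$ vanish. This forces $\langle q_n,x^{\max G+1+i}p\rangle_2=0$, and Theorem~\ref{occ} then gives $\langle q_n,x^{\max G+1+i}p\rangle=0$. For the discrete part $\langle Qq_n,x^i\rangle_2$ (nonzero only when $i\le m-1$), the key observation is that $x^{\max G+1}\U_i(x)$ is a polynomial, transparent from~\eqref{defu3}. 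Writing $\mu_\alpha=x^m\mu_{\alpha-m}$, the integral $\int Qq_n\,\U_i\,\mu_\alpha\,dx$ becomes $\langle q_n,x^{m}p(x)\,x^{\max G+1}\U_i(x)\rangle_1$; the new polynomial has degree $\le s+i<n$ and is divisible by $x^m$, so the same Theorem~\ref{occ} argument annihilates it.

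Once $Qq_n=\sum_{j=-s}^{s}\gamma_{n,j}q_{n+j}$ is established, $\gamma_{n,s}\ne 0$ follows by comparing leading coefficients (using $\Omega_G(n),\Omega_G(n+s)\ne 0$ to ensure $q_n$ and $q_{n+s}$ have the expected degree). For $\gamma_{n,-s}\ne 0$, the triangular system forces $\gamma_{n,-s}\langle q_{n-s},q_{n-s}\rangle=\langle Qq_n,q_{n-s}\rangle$, so it suffices that the right-hand side be nonzero. The symmetric part gives $\langle Qq_n,q_{n-s}\rangle_1=\langle q_n,Qq_{n-s}\rangle_1=\langle q_n,Qq_{n-s}\rangle=\lambda\langle q_n,q_n\rangle$, where the second equality holds because $Qq_{n-s}$ is divisible by $x^{\max G+1}$ (so its first $m$ derivatives at $0$ vanish, killing the $\langle\cdot,\cdot\rangle_2$ contribution), and $\lambda$ is the (nonzero) leading-coefficient ratio of $Qq_{n-s}$ to $q_n$. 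For $n\ge s+m$, the discrete piece $\langle Qq_n,q_{n-s}\rangle_2$ also vanishes by the same polynomial/divisibility argument, giving $\langle Qq_n,q_{n-s}\rangle=\lambda\langle q_n,q_n\rangle\ne 0$.

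The main obstacle I anticipate is the boundary regime $s\le n<s+m$, where $\langle Qq_n,q_{n-s}\rangle_2$ need not vanish and one must show it cannot exactly cancel $\lambda\langle q_n,q_n\rangle$. I would attack this by rewriting $\langle Qq_n,q_{n-s}\rangle_2=\langle q_n,T\rangle$, where $T(x)=\sum_{j=0}^{m-1}\tfrac{q_{n-s}^{(j)}(0)}{j!}\,x^m Q(x)\U_j(x)$ is a polynomial of degree $\le s+m-1$, and then tracking the $q_n$-component of $T$ using the explicit form of $\U_j$ in~\eqref{defu3} together with the non-degeneracy $\Omega_G(k)\ne 0$. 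This bookkeeping is the delicate point of the argument.
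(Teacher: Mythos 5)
Your overall strategy is the one the paper uses: establish $\langle Q q_n,x^i\rangle=0$ for $i\le n-s-1$ and then convert this into the support statement via the triangularity $\langle q_k,q_i\rangle=0$ ($i<k$), $\langle q_k,q_k\rangle\ne 0$ from Theorem~\ref{occ}. Your way of organizing the degree count (pushing $Q$ into the second slot of $\langle\cdot,\cdot\rangle_1$, observing that $x^{\max G+1}\U_i$ is a polynomial, and reusing Theorem~\ref{occ} on polynomials of degree $<n$ divisible by $x^m$) is a correct and slightly cleaner packaging of the same computation the paper does directly with Laguerre orthogonality; all of those steps check out, as does $\gamma_{n,s}\ne0$ and the identity $\gamma_{n,-s}\langle q_{n-s},q_{n-s}\rangle=\langle Qq_n,q_{n-s}\rangle$.

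The genuine gap is exactly the one you flag and do not close: the nonvanishing of $\gamma_{n,-s}$ for the finitely many $n$ with $s\le n\le s+m-1$. The theorem asserts $\gamma_{n,-s}\ne0$ for all such $n$, so this range cannot be waved away, and it is in fact the only place in the whole proof where the defining conditions (\ref{epm1})--(\ref{epm2}) on the $\kappa_i^g$ are actually used. The cleanest way to finish is not the sum $T$ you propose but a reduction you have already earned: since $\langle Qq_n,x^i\rangle=0$ for $i<n-s$, only the leading monomial of $q_{n-s}$ matters, and by the same vanishing applied to the lower-degree monomials of $p$ only the top power of $Q$ matters; so it suffices to show $\langle x^{\sigma}q_n,x^i\rangle\ne0$ for $\sigma=s$, $i=n-s\le m-1$. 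Here one computes directly from (\ref{defu3}): the term $-x^{-m+i}$ of $\U_i$ produces $-\int q_nx^{n-m}\mu_\alpha\,dx$, which exactly cancels $\langle x^{\sigma}q_n,x^i\rangle_1$, while every remaining term of $x^{\sigma}\U_i(x)$ is a monomial of degree $\le n-m$ with equality only for $l=m-i-1$. Hence
$$
\langle x^{\sigma}q_n,x^{n-s}\rangle=(\alpha-m+i+1)_{m-i-1}\Bigl(\sum_{g\in G}\kappa_i^gw_{m-i-1}^g\Bigr)\,\beta_{n,m}\int_0^\infty L_{n-m}^\alpha(x)\,x^{n-m}\mu_\alpha\,dx,
$$
which is nonzero because $\alpha$ is not an integer in $\{1,\ldots,m-i-1\}$ (by $\alpha-\max G\ne 0,-1,\ldots$ and $\max G\ge m$), because (\ref{epm1})--(\ref{epm2}) are equivalent, after triangular elimination, to $\sum_g\kappa_i^gw_l^g=0$ for $l\le m-i-2$ and $\sum_g\kappa_i^gw_{m-i-1}^g\ne0$, and because $\beta_{n,m}=(-1)^m\Omega_G(n+1)\ne0$. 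For what it is worth, the paper's own proof is also terse at this exact point (it asserts $\langle Qq_n,x^{n-s_u}\rangle_2=0$ ``as before'', which is only literally true when $n-s_u\ge m$), so you correctly identified the one delicate spot; you just need the displayed computation to close it.
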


\begin{proof}
For $u\ge 0$, write $Q(x)=x^{\max G+1}x^u$, and $s_u=u+\max G+1$.

We first prove that
\begin{align}\label{cs1}
\langle Q(x)q_n(x),x^i\rangle &=0, \quad n\ge s_u+1,0\le i\le n-s_u-1,\\\label{cs2}
\langle Q(x)q_n(x),x^{n-s_u}\rangle &\not =0, \quad n\ge s_u+1.
\end{align}
Indeed $x^iQ(x)x^{-m}=x^{s_u+i-m}=r(x),$
which it is always a polynomial of degree at most $n-m-1$. Hence
$$
\langle Q(x)q_n(x),x^i\rangle_1=\int_0^\infty r(x)q_n(x)\mu_{\alpha}dx=0,
$$
because $q_n$ is a linear combination of $L_{n-j}^\alpha$, $j=0,\ldots, n-m$.

Since for $i\ge m$ $\langle Q(x)q_n(x),x^i\rangle_2=0$, we have proved (\ref{cs1}) for
$m\le i\le n-s_u-1$.

Assume now $0\le i\le m-1$ and $i\le n-s_u-1$. Using (\ref{defu3}), we get
\begin{align*}
Q(x)\U_i(x)=-x^{s_u-m+i}+\sum_{l=m-i-1}^{\max G}(\alpha-l)_l x^{s_u-l-1}\sum_{g\in G;g\ge l}k_i^g w_l^g.
\end{align*}
which it is again a polynomial of degree at most $n-m-1$.
Hence
$$
\langle Q(x)q_n(x),x^i\rangle=\int_0^\infty Q(x)\U_i(x)q_n(x)\mu_{\alpha}dx=0,
$$
which proves (\ref{cs1}) also for $0\le i\le m-1$ and $i\le n-s_u-1$.

For $i=n-s_u$, we have as before $q_{n-s_u}(x)Q(x)x^{-m}=r(x)$ and $\deg r=n-m$, so $\langle Q(x)q_n(x),x^{n-s_u}\rangle_1\not =0$. And, as before, $\langle Q(x)q_n(x),x^{n-s_u}\rangle_2 =0$. Hence $\langle Q(x)q_n(x),x^{n-s_u}\rangle \not =0$. From (\ref{cs1}) and (\ref{cs2}) it follows that
$$
\langle Q(x)q_n(x),q_j(x)\rangle\begin{cases}=0, &n\ge s_u-1,0\le j\le n-s_u-1,\\
\not =0, &n\ge s_u-1, j= n-s_u,\end{cases}
$$
from where it is easy to conclude using the orthogonality properties of Theorem \ref{occ}.
\end{proof}

The orthogonality properties proved in the previous section constrain both the number of terms of these recurrence relations and the type of polynomials $Q$ for which a recurrence relation of the form (\ref{qrr}) holds.

\begin{theorem}\label{pzt} Let $\alpha$ be a real number with $\alpha-\max G\not =0,-1,-2,\ldots $, and assume
that conditions (\ref{mai}) hold. Let $Q$ be the polynomial $Q(x)=\sum_{k=u}^v\sigma_kx^k$, with $u\le v$ and $\sigma_u,\sigma_v\not =0$. If there exists $\hat g\in G$ such $\hat g-u\not \in G$ and $\hat g-u\ge 0$ then the polynomials $(q_n)_n$ do not satisfy a recurrence relation of the form (\ref{qrr}).
\end{theorem}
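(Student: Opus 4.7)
\textit{Proof proposal.}
The plan is to argue by contradiction: suppose the putative recurrence
$Q(x)q_n(x)=\sum_{j=s}^{r}\gamma_{n,j}q_{n+j}(x)$
does hold. Fix an index $i$ with $0\le i\le m-1$ and any numbers $\{\kappa_i^g\}_{g\in G}$ satisfying (\ref{epm1}) and (\ref{epm2}); pair both sides with $x^i$ using the bilinear form (\ref{innlag}). By Corollary \ref{cor1}, $\langle q_{n+j},x^i\rangle=0$ whenever $n+j\ge m$ and $n+j-1\ge i$, so every term of the right-hand side vanishes once $n$ is large enough (this step uses no hypothesis on the $\kappa_i^g$). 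Hence $\langle Q(x)q_n(x),x^i\rangle=0$ for all large $n$.

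I would then expand the left-hand side explicitly. Writing $q_n=\sum_{k=0}^{m}\beta_{n,k}L_{n-k}^\alpha$, applying Lemma \ref{lsc} termwise and using the expansion (\ref{defr}) yields
\[
\langle Q(x)q_n(x),x^i\rangle=\Gamma(\alpha)\sum_{g\in G}\kappa_i^g\,F_g(n),\qquad F_g(n)=\sum_{k=0}^{m}\beta_{n,k}P_g(n-k),
\]
with
\[
P_g(x)=\sum_{\ell=u}^{\min(v,g)}\sigma_\ell\sum_{l=\ell}^{g}(\alpha-l)_\ell\, w_l^g\,\binom{x+l-\ell}{l-\ell}.
\]
Using (\ref{defb}) and Laplace expansion along the first row, $F_g(n)$ is precisely the $(m+1)\times(m+1)$ Casoratian $\Phi^{P_g,R_{g_1},\ldots,R_{g_m}}(n)$ of (\ref{defcd}).

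Next I would exploit the freedom in the numbers $\kappa_i^g$. For the choice $i=m-1$ condition (\ref{epm1}) is vacuous and (\ref{epm2}) excludes only a hyperplane in $\RR^m$, so the vanishing $\sum_{g\in G}\kappa^g F_g(n)=0$ (true for every large $n$) holds on an open dense subset of $\RR^m$, and hence on all of $\RR^m$ by linearity. Specialising to $\kappa^g=\delta_{g,\hat g}$ gives $F_{\hat g}\equiv 0$ as a polynomial in $n$.

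For the contradiction I verify that the leading coefficient of $P_{\hat g}$ equals $\sigma_u(\alpha-\hat g)_u/(\hat g-u)!$, which is nonzero: $\sigma_u\ne 0$ by hypothesis, and the condition $\alpha-\max G\not\in\{0,-1,-2,\ldots\}$ forces $\alpha-g\not\in\{0,-1,-2,\ldots\}$ for every $g\in G$, whence $(\alpha-g)_u\ne 0$. Thus $\deg P_{\hat g}=\hat g-u\ge 0$, and since $\hat g-u\notin G$ the degrees $\hat g-u,g_1,\ldots,g_m$ are pairwise distinct; formula (\ref{gcd}) then gives $\deg F_{\hat g}=(\hat g-u)+\sum_{g\in G}g-\binom{m+1}{2}\ge 0$, so $F_{\hat g}$ is a nonzero polynomial in $n$. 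This contradicts $F_{\hat g}\equiv 0$. The main hurdle I foresee is the bookkeeping behind the explicit form of $F_g$, together with pinning down that the hypothesis ``$\hat g-u\notin G$, $\hat g-u\ge 0$'' is precisely what keeps all $m+1$ degrees entering the Casoratian distinct so that (\ref{gcd}) produces a nonzero determinant.
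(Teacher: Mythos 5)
Your argument is correct and follows essentially the same route as the paper's proof: pair the recurrence with a low power of $x$, kill the right-hand side via Corollary \ref{cor1}, expand the left-hand side with Lemma \ref{lsc}, specialise $\kappa^g=\delta_{g,\hat g}$, and recognise the result as the Casoratian $\Phi^{r,\{\R_g\}}$ whose degree formula (\ref{gcd}) forces a contradiction since $\hat g-u\notin G$. The only cosmetic differences are that the paper works with $i=0$ and sets $\kappa_0^g=\delta_{g,\hat g}$ directly (Corollary \ref{cor1} needs no hypothesis on the $\kappa$'s, so your density argument for $i=m-1$ is superfluous), and that you correctly include the factor $1/(\hat g-u)!$ in the leading coefficient.
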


\begin{proof}
We proceed by \textit{reductio ad absurdum}.
Assume that the sequence $(q_n)_n$ satisfies the recurrence relation (\ref{qrr}).
Using Theorem \ref{t4.1} and Corollary \ref{cor1} we get 
$$
\langle Q(x)q_n,1\rangle =\sum_{j=s}^v\gamma_{n,j}\langle q_{n+j},1\rangle=0,
$$ 
for $n\ge m-s$ (with no assumptions on the numbers $\kappa_i^g$). Now, using Lemma \ref{lsc}, we can write
\begin{align*}
0&=\langle Q(x)q_n,1\rangle=\sum_{j=0}^m\beta_{n,j}\langle Q(x)L_{n-j}^\alpha (x),1\rangle
=\Gamma(\alpha)\sum_{j=0}^m\beta_{n,j}\sum_{k=u}^v\sigma_k\langle x^kL_{n-j}^\alpha (x),1\rangle\\
&=\Gamma(\alpha)\sum_{g\in G}\kappa _0^g \sum_{j=0}^m\beta_{n,j}\sum_{k=u}^v\sigma_k\sum _{l=k}^g(\alpha-l)_kw_l^g\binom{n-j+l-k}{l-k}.
\end{align*}
Taking $\kappa_0^g=\delta_{g,\hat g}$ (where $\delta_{i,j}$ denotes the Kronecker's delta), we have
\begin{equation}\label{mv5}
0=\sum_{j=0}^m\beta_{n,j}\sum_{k=u}^v\sigma_k\sum _{l=k}^{\hat g}(\alpha-l)_kw_l^{\hat g}\binom{n-j+l-k}{l-k}.
\end{equation}
Write $r$ for the polynomial
$$
r(x)=\sum_{k=u}^v\sigma_k\sum _{l=k}^{\hat g}(\alpha-l)_kw_l^{\hat g}\binom{x+l-k}{l-k}.
$$
One can now see that the polynomial $r$ has degree $\hat g-u\ge 0$ and leading coefficient equal to $\sigma_u(\alpha-\hat g)_u\not =0$. Consider now the Casoratian determinant defined by the polynomials $r,\mathcal R_g,g\in G$, as in (\ref{defcd}): $\Phi (x)=\Phi ^{r,\{\mathcal R_g\}_{g\in G}}(x)$. From the definition of $\beta_{n,j}$ (\ref{defb}), the identity (\ref{mv5}) can then be rewritten as $0=\Phi (n)$, which is a contradiction because, since $\deg r=\hat g-u\not\in G$, $\Phi$ is a polynomial of
degree $\hat g-u+\sum_{g\in G}g-\binom{m+1}{2}>0$ (see \eqref{defcd}).
\end{proof}

As a consequence of the previous Theorem  we prove that when $\alpha-\max G\not =0,-1,-2,\ldots $,
the sequence $(q_n)_n$ can never satisfy a three-term recurrence relation of the form (\ref{ttrr}).

\begin{corollary}\label{pjc} For $\alpha\in\RR$ with $\alpha-\max G\not =0,-1,-2,\ldots $, assume
that conditions (\ref{mai}) hold. Then the polynomials $(q_n)_n$ never satisfy a three-term recurrence relation as in (\ref{ttrr}) and hence they are not orthogonal with respect to any measure.
\end{corollary}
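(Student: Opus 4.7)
The plan is to deduce Corollary \ref{pjc} directly from Theorem \ref{pzt} by exhibiting a specific $\hat g\in G$ that witnesses the hypothesis for the linear polynomial $Q(x)=x$, and then invoke (one direction of) Favard's theorem for the orthogonality statement.

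First, observe that a three-term recurrence relation of the form (\ref{ttrr}) is precisely a relation of the form (\ref{qrr}) with $Q(x)=x$, $s=-1$, $r=1$, and $\gamma_{n,\pm 1}\neq 0$. So to rule it out, it suffices to show that $(q_n)_n$ cannot satisfy any relation (\ref{qrr}) with $Q(x)=x$. In the notation of Theorem \ref{pzt}, this corresponds to $u=v=1$ and $\sigma_1\neq 0$.

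To apply Theorem \ref{pzt} with this $Q$, I must exhibit some $\hat g\in G$ with $\hat g - 1 \ge 0$ and $\hat g - 1\notin G$. The natural choice is $\hat g=g_1=\min G$. Since $G$ consists of positive integers, $g_1\ge 1$, so $\hat g - 1\ge 0$; and since $g_1$ is the minimum of $G$, we have $g_1-1<g_1\le g$ for every $g\in G$, so $g_1-1\notin G$. Hence the hypothesis of Theorem \ref{pzt} is satisfied, and the theorem asserts that $(q_n)_n$ satisfies no recurrence relation of the form (\ref{qrr}) with $Q(x)=x$. In particular, no three-term recurrence (\ref{ttrr}) can hold.

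For the final assertion, I use the standard fact (one half of Favard's theorem as stated in the Introduction) that if a polynomial sequence $(q_n)_n$ with $\deg q_n=n$ is orthogonal with respect to a measure on the real line, then it must satisfy a three-term recurrence relation of the form (\ref{ttrr}). Since we have just shown no such recurrence exists, $(q_n)_n$ cannot be orthogonal with respect to any measure on $\RR$.

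There is no real obstacle here; the entire argument is essentially a specialization of Theorem \ref{pzt} to the smallest element of $G$. The only point requiring a moment of attention is the trivial verification that $g_1\ge 1$ (which is built into the standing assumption that $G$ is a set of \emph{positive} integers), so that $\hat g - 1 = g_1 - 1$ is indeed a nonnegative integer and Theorem \ref{pzt} applies.
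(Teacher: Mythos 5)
Your proposal is correct and follows exactly the paper's own argument: apply Theorem \ref{pzt} with $Q(x)=x$ and $\hat g=\min G$, noting that $\min G-1\ge 0$ (since $G$ consists of positive integers) and $\min G-1\notin G$, and then invoke Favard's theorem for the non-orthogonality conclusion. The paper's proof is a one-line version of the same specialization.
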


\begin{proof}

It is enough to apply the previous theorem to $Q(x)=x$ and $\hat g=\min G$.
\end{proof}

Using Theorem \ref{pzt} we can also characterize the algebra of operators $\alp_n$ (\ref{algp}) when $G$ is a segment, i.e. $G=\{f,f+1,\ldots , f+h\}$ for  some integers $f>0$ and $h\geq0$.

It is easy to see that for an operator $D_n\in \al_n$ of the form (\ref{doho2g}), the eigenvalue $Q$ is a polynomial of degree $r$. Moreover, this polynomial $Q$ determines uniquely the operator $D_n$. The map $D_n\to Q(x)$ is an isomorphism between the algebra of operators $\al _n$ and the algebra of polynomials $\alp_n$ (\ref{algp}). So, we study the algebra $\al _n$ by means of the algebra $\alp_n$.

\begin{corollary}\label{c4.4} Let $\alpha$ be a real number with $\alpha-\max G\not =0,-1,-2,\ldots $, and assume
that conditions (\ref{mai}) hold. If $G$ is a segment then
$$
\alp_n=\{p(x)x^{\max G+1}+c:p\in \RR[x],c\in \RR\}.
$$
\end{corollary}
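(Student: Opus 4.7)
The plan is to establish the two inclusions in the equality $\alp_n = \{p(x)x^{\max G+1}+c : p\in\RR[x],\, c\in\RR\}$ separately.

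For the inclusion $\supseteq$, I would invoke Theorem \ref{t4.1} directly: it produces, for each $p\in\RR[x]$, an operator $D_n\in\A_n$ satisfying $D_n(q_n)=x^{\max G+1}p(x)q_n$, so $x^{\max G+1}p(x)\in\alp_n$. Since the identity operator $\Sh_0$ lies in $\A_n$ and satisfies $\Sh_0(q_n)=1\cdot q_n$, every constant belongs to $\alp_n$ as well. Because $\al_n$ is an algebra (and in particular a linear space), the sum $p(x)x^{\max G+1}+c$ lies in $\alp_n$.

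For the inclusion $\subseteq$, let $Q\in\alp_n$ and set $Q_1(x)=Q(x)-Q(0)$. By the previous paragraph $Q(0)\in\alp_n$, hence $Q_1\in\alp_n$. If $Q_1\equiv 0$, then $Q=Q(0)$ already has the required form. Otherwise write $Q_1(x)=\sum_{k=u}^v\sigma_kx^k$ with $\sigma_u,\sigma_v\not=0$ and $u\ge 1$. Since $(q_n)_n$ satisfies a recurrence with left-hand side $Q_1(x)q_n(x)$, Theorem \ref{pzt} forbids the existence of any $\hat g\in G$ with $\hat g-u\notin G$ and $\hat g-u\ge 0$. Equivalently, every $\hat g\in G$ must satisfy $\hat g-u<0$ or $\hat g-u\in G$.

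The key combinatorial step is to translate this forbidden-element condition under the hypothesis that $G=\{f,f+1,\ldots,f+h\}$ is a segment with $f\ge 1$ and $h\ge 0$. Writing a typical $\hat g$ as $f+k$ for $0\le k\le h$, the condition $f+k-u\in G$ reads $u\le k$ (the upper bound $k-u\le h$ is automatic from $u\ge 1$), and $f+k-u<0$ reads $k\le u-f-1$. Thus $k$ must avoid the interval $[u-f,\,u-1]$ of length $f$. Requiring this for every $k\in\{0,1,\ldots,h\}$ forces either $u-1<0$ or $u-f>h$, that is, $u=0$ or $u\ge \max G+1$. Since we are in the case $u\ge 1$, we conclude $u\ge \max G+1$, which means $Q_1$ is divisible by $x^{\max G+1}$, say $Q_1(x)=x^{\max G+1}p(x)$ with $p\in\RR[x]$. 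Therefore $Q(x)=Q(0)+x^{\max G+1}p(x)$, completing the proof.

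I do not anticipate a real obstacle here: the only subtle point is remembering to peel off the constant term $Q(0)$ before applying Theorem \ref{pzt}, because that theorem only constrains the smallest \emph{nonzero} exponent $u$, and a constant term would otherwise make the hypothesis vacuously fail. Once that reduction is made, the segment hypothesis on $G$ makes the exclusion condition collapse cleanly to $u\ge\max G+1$.
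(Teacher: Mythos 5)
Your proposal is correct and takes essentially the same route as the paper: the inclusion $\supseteq$ follows from Theorem \ref{t4.1} together with the fact that constants lie in $\alp_n$, and the inclusion $\subseteq$ follows by applying Theorem \ref{pzt} to $Q-Q(0)$ and using the segment structure of $G$ to force the lowest nonzero exponent $u$ up to $\max G+1$. The only differences are cosmetic: you phrase the exclusion argument as disjointness of the integer intervals $[0,h]$ and $[u-f,u-1]$, where the paper rules out $u\in\{1,\ldots,h\}$ and $u\in\{h+1,\ldots,f+h\}$ in two batches, and you make explicit the (correct) reduction that $Q-Q(0)\in\alp_n$, which the paper leaves implicit.
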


\begin{proof}
Write $G=\{f,f+1,\ldots , f+h\}$. Take now $Q\in \alp_n$ and write $Q(x)-Q(0)=\sum_{k=u}^v\sigma_kx^k$. If we set $\hat g=f+l$, $l=0,\ldots,h-1$, we have that $\hat g-l-1\not \in G$ and $\hat g-l-1\ge 0$. Hence, according to Theorem \ref{pzt}, we have $u\ge h+1$. For $\hat g=f+h$, we have for $l=h+1,\ldots , f+h$ that $\hat g-l\not \in G$ and $\hat g-l\ge 0$, and hence $u\ge f+h+1$. Now it is enough to apply Theorem \ref{t4.1}.
\end{proof}

When $\alpha=1,\ldots, \max G$, Corollary \ref{pjc} is no longer true. Indeed, for $\alpha\ge m$, $G=\{\alpha,\alpha+1,\ldots, \alpha+m-1\}$ and $\R_g$, $g\in G$, as in (\ref{rpm}), the polynomials $(q_n)_n$ are orthogonal with respect to a measure and then $\alp_n=\RR[x]$ (since $x\in \alp_n$).

\medskip
\begin{remark}\label{vrr}
If $G$ is not a segment, the algebra $\alp_n$ can be more complicated, as the following example shows. Take $\alpha$ with $\alpha \not =5,4,3,\ldots ,\alpha^2-9\alpha-1\not =0$, $G=\{1,2,5\}$ and
$$
R_1(x)=x-1,\quad R_2(x)=x^2+1,\quad R_5(x)=x^5+x^4+x^3+1.
$$
Using Maple, one can check that $\Omega_G(n)=-12n^5+144n^4-628n^3+1296n^2-1280n+476\not =0$, $n\ge 0$. The polynomials $(q_n)_n$ satisfy then recurrence relations of the form (\ref{qrr}) for
$$
Q_0(x)=x^4-\frac{(\alpha-2)(\alpha-4)(\alpha+9)}{\alpha^2-9\alpha-1}x^3,\quad
Q_1(x)=x^5+\frac{5(\alpha-5)(\alpha-4)(\alpha-2)(\alpha-1)}{4(\alpha^2-9\alpha-1)}x^3.
$$
Computational evidence suggests that
$$
\alp_n=\{ p(x)x^6+c_0Q_0(x)+c_1Q_1(x)+c_2:p\in \RR[x],c_0,c_1,c_2\in \RR\}.
$$
\end{remark}

\section{The case $\alpha=1,\ldots, \max G$}

The case when $\alpha=1,\ldots, \max G$ is specially interesting because by chosing $\R_g$, $g\in G$, as in (\ref{rpm}) or (\ref{rpm1}), we get
Krall-Laguerre polynomials and hence orthogonality with respect to a measure (something impossible when $\alpha\not = 1,\ldots, \max G,$ as Corollary \ref{pjc} showed). In general, we can find orthogonality properties by modifying properly the bilinear form $\langle p,q\rangle$ defined by (\ref{innlag}). To do that we need to introduce some more auxiliary functions. We define the nonnegative integers
\begin{equation*}\label{lpm}
\xi_M=\max\{0,m-\alpha\}, \quad  \xi_m=-\min\{0,m-\alpha\}.
\end{equation*}
For $i=0,\ldots, m-1$ and $g\in G$, the function $U_{i,\xi}^g$ is defined by
\begin{align*}
U_{i,\xi}^g(x)&=(i-m+\alpha+1)_{\xi_M}\frac{-x^{-m+i}}{m}+\kappa_i^g\sum_{l=0}^g(\alpha-l)_l w_l^g x^{-l-1},
\end{align*}
with no assumptions on the numbers $\kappa_i^g$, $g\in G$.
When $g\ge \alpha$, we define
\begin{align*}
V_{i,\xi}^g(x)&=(i-m+\alpha+1)_{\xi_M}\frac{-x^{-m+i}}{m}+\kappa_i^g\sum_{l=0}^{\alpha-1}(\alpha-l)_l w_l^g x^{-l-1}.
\end{align*}
For $g\ge \alpha$, we have to change $\langle p,q\rangle_2$ in (\ref{innlag2}) by transforming a portion of the integral in a discrete Sobolev inner product, and when $1\le \alpha\le m-1$ we also have to change $\langle p,q\rangle_1$ in (\ref{ip1}) by transforming it into a (nonsymmetric) Sobolev inner product. More precisely, we define the bilinear form
\begin{align}\label{innlagxs}
\langle p,q\rangle_{\xi}& =\int_0^\infty p(x)q^{(\xi_M)}(x)\mu_{\xi_m}dx\\\nonumber
&\quad +\sum_{i=0}^{m-1}\frac{q^{(i)}(0)}{i!}\int_0^\infty p(x)\left(\sum_{g\in G;g<\alpha}U_{i,\xi}^g(x)+\sum_{g\in G;g\ge \alpha}V_{i,\xi}^g(x)\right)\mu_{\alpha}dx\\\nonumber
&\quad +\Gamma(\alpha)\sum_{i=0}^{m-1}\frac{q^{(i)}(0)}{i!}\sum_{g\in G;g\ge \alpha}\kappa_i^g\sum_{j=0}^{g-\alpha}\frac{p^{(j)}(0)}{j!}
\sum_{l=\alpha+j}^g(\alpha-l)_jw_l^g.
\end{align}

The key Lemma \ref{lsc} is still true.

\begin{lemma} \label{l5.2} For $k,u\ge 0$, with $u\ge k$, and $0\le i\le m-1$, we have
\begin{equation*}\label{fsc2}
\langle x^kL_u^\alpha (x),x^i\rangle_{\xi} =\Gamma(\alpha)\sum_{g\in G}\kappa _i^g \sum _{l=k}^g(\alpha-l)_kw_l^g\binom{u+l-k}{l-k}.
\end{equation*}
\end{lemma}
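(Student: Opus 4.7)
The plan is to evaluate the three summands of $\langle x^k L_u^\alpha,x^i\rangle_\xi$ separately and show that they reassemble into exactly the formula of Lemma \ref{lsc}. Since $q(x)=x^i$ gives $q^{(j)}(0)/j!=\delta_{i,j}$, the sums over the index $i$ in Parts 2 and 3 collapse to the single term at this $i$, and the computation reduces to tracking three blocks: the integral in Part 1, the rational-function integrals of $U_{i,\xi}^g$ and $V_{i,\xi}^g$ against $\mu_\alpha$, and the purely discrete Sobolev contribution of Part 3.

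First I would check that Part 1 cancels against the $-(i-m+\alpha+1)_{\xi_M}x^{-m+i}/m$ summand of $U_{i,\xi}^g$ (and $V_{i,\xi}^g$) after summing over the $m$ elements $g\in G$. Indeed, $(x^i)^{(\xi_M)}=\tfrac{i!}{(i-\xi_M)!}x^{i-\xi_M}$ and $\xi_m-\xi_M=\alpha-m$ turn Part 1 into $\tfrac{i!}{(i-\xi_M)!}\int L_u^\alpha\mu_{\alpha-m+k+i}\,dx$, while the $m$-fold sum of the first term of $U_{i,\xi}^g$ produces $-(i-m+\alpha+1)_{\xi_M}\int L_u^\alpha\mu_{\alpha-m+k+i}\,dx$. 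A short case split on $\alpha\ge m$ versus $\alpha<m$ shows that $\tfrac{i!}{(i-\xi_M)!}$ and $(i-m+\alpha+1)_{\xi_M}$ coincide (both vanish when $i<\xi_M$; both equal the same falling factorial otherwise), so these two contributions kill each other.

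What remains of Part 2 is $\sum_{g\in G}\kappa_i^g\sum_{l=0}^{L_g}(\alpha-l)_l w_l^g\int x^k L_u^\alpha x^{-l-1}\mu_\alpha\,dx$, where $L_g=g$ for $g<\alpha$ and $L_g=\alpha-1$ for $g\ge\alpha$. Using the same identity $(\alpha-l)_l\,\Gamma(\alpha+k-l)=\Gamma(\alpha)(\alpha-l)_k$ that appears in the proof of Lemma \ref{lsc}, this becomes
$$\Gamma(\alpha)\sum_{g\in G}\kappa_i^g\sum_{l=0}^{L_g}(\alpha-l)_k w_l^g\binom{u+l-k}{l-k}.$$
For $g<\alpha$ this already matches the desired right-hand side (the binomial coefficient is $0$ for $l<k$). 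For $g\ge\alpha$, only $l=0,\dots,\alpha-1$ appear, so relative to the target $\sum_{l=k}^{g}$ the block $l=\alpha,\dots,g$ is missing; but in the sub-block $l=\alpha,\dots,\alpha+k-1$ the factor $(\alpha-l)_k$ vanishes identically, leaving only the range $l=\alpha+k,\dots,g$ still to be produced.

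Part 3 supplies exactly that missing range. Reading off the Taylor coefficients of $p(x)=x^k L_u^\alpha(x)$ gives $p^{(j)}(0)/j!=(-1)^{j-k}(j-k)!^{-1}\binom{u+\alpha}{u-j+k}$ for $j\ge k$; substituting $j'=j-k$, swapping the $j$ and $l$ summations, and factoring $(\alpha-l)_{j'+k}=(\alpha-l)_k(\alpha-l+k)_{j'}$ reduces the inner sum to
$$\sum_{j'=0}^{l-\alpha-k}\frac{(-1)^{j'}(\alpha-l+k)_{j'}}{j'!}\binom{u+\alpha}{u-j'}.$$
Recognizing $(\alpha-l+k)_{j'}/j'!=\binom{\alpha+j'+k-l-1}{j'}$ and $\binom{u+\alpha}{u-j'}=\binom{\alpha+u}{\alpha+j'}$, this is precisely the combinatorial identity (\ref{fcc}), evaluating to $\binom{u+l-k}{l-k}$. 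Hence Part 3 equals $\Gamma(\alpha)\sum_{g\ge\alpha}\kappa_i^g\sum_{l=\alpha+k}^g(\alpha-l)_k w_l^g\binom{u+l-k}{l-k}$, which is exactly the block that was missing. Adding everything up gives the claimed formula. The main obstacle is the final bookkeeping step: matching the three distinct $l$-ranges produced by Parts 1--3 with the single uniform range $l=k,\dots,g$ of the target, and invoking (\ref{fcc}) in the correctly substituted form; once the identification $(\alpha-l+k)_{j'}/j'!=\binom{\alpha+j'+k-l-1}{j'}$ is made, the rest is a mild rearrangement parallel to the proof of Lemma \ref{lsc}.
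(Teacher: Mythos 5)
Your proposal is correct and follows essentially the same route as the paper: split the bilinear form $\langle\cdot,\cdot\rangle_\xi$ into its three blocks, observe that the Sobolev integral cancels against the $-(i-m+\alpha+1)_{\xi_M}x^{-m+i}/m$ terms, reduce the remaining rational-function integrals via $(\alpha-l)_l\Gamma(\alpha+k-l)=\Gamma(\alpha)(\alpha-l)_k$, and recover the missing range $l=\alpha+k,\dots,g$ from the discrete part by shifting $j\mapsto j-k$, interchanging sums, and applying (\ref{fcc}). Your verification that $i!/(i-\xi_M)!=(i-m+\alpha+1)_{\xi_M}$ makes explicit a cancellation the paper only cites as "proceeding as in the proof of Lemma \ref{lsc}"; otherwise the arguments coincide.
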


\begin{proof}
Proceeding as in the proof of Lemma \ref{lsc}, we get
\begin{align}\label{ipc}
\langle x^kL_u(x),x^i\rangle_\xi &=\Gamma(\alpha)\sum_{g\in G;g<\alpha}\kappa _i^g \sum _{l=k}^g(\alpha-l)_kw_l^g\binom{u+l-k}{l-k}\\\nonumber
&\quad +\Gamma(\alpha)\sum_{g\in G;g\ge \alpha}\kappa _i^g \sum _{l=k}^{\alpha-1}(\alpha-l)_kw_l^g\binom{u+l-k}{l-k}\\\nonumber
&\quad+\Gamma(\alpha)\sum_{g\in G;g\ge \alpha}\kappa_i^g\sum_{j=0}^{g-\alpha}\frac{(x^kL_u^\alpha(x))^{(j)}(0)}{j!}
\sum_{l=\alpha+j}^g(\alpha-l)_jw_l^g.
\end{align}
Using (\ref{dlp}), we have $\displaystyle (x^kL_u^\alpha (x))^{(j)}(0)=(-1)^{j-k}\binom{j}{k}k!\binom{\alpha+u}{\alpha+j-k}$.
Using now (\ref{fcc}), we get that the last part of the right-hand side of the previous formula is given by
\begin{align*}
\sum_{g\in G;g\ge \alpha}&\kappa_i^g\sum_{j=0}^{g-\alpha}\frac{(x^kL_u^\alpha(x))^{(j)}(0)}{j!}
\sum_{l=\alpha+j}^g(\alpha-l)_{j}w_l^g\\
&=\sum_{g\in G;g\ge \alpha}\kappa_i^g\sum_{j=0}^{g-\alpha}\frac{(-1)^{j-k}\binom{j}{k}k!\binom{\alpha+u}{\alpha+j-k}}{j!}\sum_{l=\alpha+j}^g(\alpha-l)_jw_l^g
\\
&=\sum_{g\in G;g\ge \alpha}\kappa_i^g\sum_{j=0}^{g-\alpha-k}\frac{(-1)^{j}\binom{\alpha+u}{\alpha+j}}{j!}\sum_{l=\alpha+j+k}^g(\alpha-l)_{j+k}w_l^g\\
&=\sum_{g\in G;g\ge \alpha}\kappa_i^g\sum_{l=\alpha+k}^g(\alpha-l)_kw_l^g
\sum_{j=0}^{l-\alpha-k}(-1)^{j}\binom{\alpha+j+k-l-1}{j}\binom{\alpha+u}{\alpha+j}\\
&=\sum_{g\in G;g\ge \alpha}\kappa_i^g\sum_{l=\alpha}^g(\alpha-l)_kw_l^g
\binom{u+l-k}{l-k}
\end{align*}
Substituting this in (\ref{ipc}) we get the result.
\end{proof}

\begin{remark}\label{remSobL}
Observe that the discrete part of the bilinear form \eqref{innlagxs} can not be represented in general as the discrete Laguerre-Sobolev bilinear form studied in \cite{ddI1}, which depends on an $m\times m$ matrix $M$. If we choose $\R_g$, $g\in G$, as in (\ref{rpm}) or (\ref{rpm1}), then we get Krall-Laguerre polynomials and hence, for some choice of the parameters involved, the bilinear form \eqref{innlagxs} reduces to the Krall-Laguerre measure \eqref{Kcwm}. 
We would like to stress here that there is a missprint in the matrix $M$ for the Krall-Laguerre case given at the end of \cite{ddI1}: the correct matrix $M=(M_{i,j})_{i,j=0}^{m-1}$ which gives the measure \eqref{Kcwm} is
$$
M_{i,j}=\begin{cases}
\displaystyle\binom{i+j}{i}b_{i+j},&i+j\leq m-1,\\
0,&i+j> m-1.
\end{cases}
$$
\end{remark}

We are now ready to prove the orthogonality conditions for $(q_n)_n$ (\ref{quschi}) with respect to the bilinear form (\ref{innlagxs}).
\begin{theorem}\label{l5.1} Let $\alpha $ be a positive integer  satisfying $ \alpha\le \max G $. Assume that conditions (\ref{mai}) hold and that the numbers $\kappa_i^g$, $i=0,\ldots, m-1$, $g\in G$, satisfy (\ref{epm1}) and (\ref{epm2}). Then, for $n\geq0$, the polynomials $(q_n)_n$ satisfy the following orthogonality properties with respect to the bilinear form (\ref{innlagxs}):
\begin{align*}
\langle q_n,q_i\rangle _{\xi}&=0,\quad i=0,\ldots , n-1,\\
\langle q_n,q_n\rangle _{\xi}&\not =0.
\end{align*}
With no assumptions on the real numbers $\kappa_j^g$, $j=0,\ldots, m-1$, $g\in G$, we always have
$$
\langle q_n(x),x^i\rangle_\xi=0, \quad n\ge m,\quad 0\le i\le n-1.
$$
\end{theorem}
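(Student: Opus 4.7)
The strategy is to run the proof of Theorem \ref{occ} essentially verbatim, with Lemma \ref{l5.2} replacing Lemma \ref{lsc}, together with a small adjustment in the ``high-degree'' case $i\ge m$ needed to accommodate the derivative $q^{(\xi_M)}$ appearing in the integral part of \eqref{innlagxs}. The heart of the proof is to evaluate $\langle q_n,x^i\rangle_\xi$ for $0\le i\le n$, showing vanishing for $i<n$ and non-vanishing at $i=n$; the statements $\langle q_n,q_i\rangle_\xi=0$ and $\langle q_n,q_n\rangle_\xi\neq 0$ then follow by bilinearity after writing $q_n(x)=\lambda_nx^n+\mbox{lower-degree terms}$.

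\emph{Case 1: $m\le i\le n$.} For such $i$, the derivatives $(x^i)^{(j)}(0)$ vanish for $j=0,\ldots,m-1$, killing the two Sobolev pieces of \eqref{innlagxs}, so only the integral $\int_0^\infty q_n(x)\,(x^i)^{(\xi_M)}\mu_{\xi_m}(x)\,dx$ survives. Since $\xi_m+i-\xi_M=\alpha-m+i$, this is a nonzero scalar multiple of $\int_0^\infty q_n(x)\mu_{\alpha-m+i}(x)\,dx$. Expanding $q_n=\sum_{j=0}^{m\wedge n}\beta_{n,j}L_{n-j}^\alpha$ and using (as derived inside the proof of Lemma \ref{lsc}) the identity
\[
\int_0^\infty L_{n-j}^\alpha\mu_{\alpha-l}\,dx=\frac{(l)_{n-j}}{(n-j)!}\Gamma(\alpha-l+1)
\]
with $l=m-i\le 0$, the Pochhammer $(l)_{n-j}$ contains the factor $0$ precisely when $n-j\ge 1+(i-m)$. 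This forces every term to vanish when $i\le n-1$, and leaves only the extreme term $j=m$ when $i=n$, yielding $\langle q_n,x^n\rangle_\xi\propto\beta_{n,m}=(-1)^m\Omega_G(n+1)\neq 0$.

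\emph{Case 2: $0\le i\le m-1$.} Apply Lemma \ref{l5.2} with $k=0$ together with \eqref{defr} to obtain $\langle L_u^\alpha,x^i\rangle_\xi=\Gamma(\alpha)\sum_{g\in G}\kappa_i^g\Rc_g(u)$. For $n\ge m$, expanding $q_n$ and applying \eqref{defbetas} yields $\langle q_n,x^i\rangle_\xi=0$ with \emph{no} hypothesis on the $\kappa_i^g$; combined with Case 1 this is exactly the final ``no-assumptions'' assertion of the theorem. For $n\le m-1$, rewrite $\sum_{j=0}^n\beta_{n,j}\Rc_g(n-j)=-\sum_{j=1}^{m-n}\beta_{n,n+j}\Rc_g(-j)$ via \eqref{defbetas}; then \eqref{epm1} gives vanishing for $i\le n-1$, and \eqref{epm2} delivers
\[
\langle q_n,x^n\rangle_\xi=-\Gamma(\alpha)\beta_{n,m}\sum_{g\in G}\kappa_n^g\Rc_g(-m+n)\neq 0.
\]

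Combining the two cases we get $\langle q_n,x^i\rangle_\xi=0$ for $0\le i\le n-1$ and $\langle q_n,x^n\rangle_\xi\neq 0$, which yields all three assertions of the theorem by bilinearity. The only delicate point is the Pochhammer bookkeeping in Case 1: one has to track carefully that $(m-i)_{n-j}$ vanishes in exactly the expected range when $m-i\le 0$, which reduces to the observation that $0$ lies in the arithmetic progression $\{m-i,m-i+1,\ldots,m-i+n-j-1\}$ iff $n-j-1\ge i-m$. Once Lemma \ref{l5.2} is in hand, everything else mirrors Theorem \ref{occ} line for line.
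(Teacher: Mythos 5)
Your proposal is correct and follows exactly the route the paper intends: its own proof of this theorem is the one-line remark that it proceeds as in Theorem \ref{occ} and Corollary \ref{cor1} with Lemma \ref{l5.2} replacing Lemma \ref{lsc}, and your write-up (including the observation that $\xi_m-\xi_M=\alpha-m$ so the integral term reduces to $\int q_n\,\mu_{\alpha-m+i}\,dx$ for $i\ge m$, and the Pochhammer bookkeeping isolating the $j=m$ term at $i=n$) is a faithful and accurate expansion of that argument.
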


\begin{proof}
Similar to the proofs of Theorem \ref{occ} and Corollary \ref{cor1}, using Lemma \ref{l5.2} instead of Lemma \ref{lsc}.
\end{proof}

The orthogonality conditions in the first part of Theorem \ref{l5.1} lead us to an improvement of Theorem \ref{t4.1} (the proof is similar  and it is omitted).

\begin{corollary}\label{t5.3} Let $\alpha$ be a positive integer satisfying $\alpha\le\max G$. Write $\rho=\max\{m,\max G-\alpha+1,\alpha\}$.
For any $p\in \RR[x]$ the sequence $(q_n)_n$ satisfies the recurrence relation
$$
x^{\rho}p(x)q_n(x)=\sum_{j=-s}^s\gamma_{n,j}q_{n+j}(x),\quad \gamma_{n,s}, \gamma_{n,-s}\not =0,
$$
where $s=\deg p+\rho$.
\end{corollary}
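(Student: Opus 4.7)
The plan is to mirror the proof of Theorem~\ref{t4.1}, with $\langle\cdot,\cdot\rangle_\xi$ now in the role of $\langle\cdot,\cdot\rangle$. Fix $u\ge 0$, set $Q(x)=x^{\rho+u}$ and $s_u=u+\rho$. I will show that for $n$ large enough
\begin{align*}
\langle Q(x)q_n(x),x^i\rangle_\xi &= 0,\quad 0\le i\le n-s_u-1,\\
\langle Q(x)q_n(x),x^{n-s_u}\rangle_\xi &\ne 0.
\end{align*}
Converting from the monomial basis to $\{q_i\}$ (possible because $\deg q_i=i$) and invoking the orthogonality of $(q_n)_n$ from Theorem~\ref{l5.1} then forces $Q(x)q_n(x)=\sum_{j=-s_u}^{s_u}\gamma_{n,j}q_{n+j}(x)$ with $\gamma_{n,s_u}\ne 0$ (comparison of leading coefficients) and $\gamma_{n,-s_u}\ne 0$ (from the second display). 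Linearity in $p$ then yields the statement for $x^\rho p(x)$ with $s=\deg p+\rho$, since the contribution to $q_{n-s}$ comes only from the top monomial of $p$.

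The three summands of (\ref{innlagxs}) must be handled separately, and the three conditions built into $\rho=\max\{m,\alpha,\max G-\alpha+1\}$ are exactly what is needed. The first summand $\int Qq_n\,(x^i)^{(\xi_M)}\mu_{\xi_m}dx$ is zero for $i<\xi_M$; for $i\ge\xi_M$, the identity $\xi_m-\xi_M=\alpha-m$ reduces it (up to a positive constant) to $\int q_n(x)x^{s_u+i-m}\mu_\alpha dx$, which is the integral of $q_n$ against a genuine polynomial because $\rho\ge m$. Expanding $q_n=\sum_j\beta_{n,j}L_{n-j}^\alpha$ and applying Laguerre orthogonality, this vanishes precisely when $s_u+i-m<n-m$, i.e.\ $i<n-s_u$, while at $i=n-s_u$ the $j=m$ term survives and gives a nonzero contribution proportional to $\beta_{n,m}\ne 0$. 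For the third (pure discrete) summand, a Leibniz computation yields $(x^{s_u}q_n)^{(j)}(0)=\binom{j}{s_u}s_u!\,q_n^{(j-s_u)}(0)$ for $j\ge s_u$ and $0$ otherwise; since in that summand $j$ ranges only over $0\le j\le g-\alpha$ with $g\in G$, and $\rho\ge\max G-\alpha+1\ge g-\alpha+1$, the factor $(Qq_n)^{(j)}(0)$ is identically zero and the whole summand disappears.

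The middle summand $\sum_k\frac{(x^i)^{(k)}(0)}{k!}\int Qq_n\,\tilde U_k\,\mu_\alpha dx$, where I abbreviate $\tilde U_k:=\sum_{g<\alpha}U_{k,\xi}^g+\sum_{g\ge\alpha}V_{k,\xi}^g$, is the delicate piece: only $k=i$ contributes and only for $0\le i\le m-1$. Using the conditions (\ref{epm1})--(\ref{epm2}) on the $\kappa_i^g$ in the same manner as the derivation of (\ref{defu3}), one rewrites
\[
\tilde U_i(x)=-(i-m+\alpha+1)_{\xi_M}\,x^{-m+i}+\sum_{l=m-1-i}^{\alpha-1}(\alpha-l)_l\,x^{-l-1}\sum_{g\in G,\;g\ge l}\kappa_i^g w_l^g.
\]
When $i<m-\alpha$ both the Pochhammer factor and the summation range are empty, so $\tilde U_i\equiv 0$; when $i\ge m-\alpha$, the most positive power of $x$ appearing in $\tilde U_i$ is $x^{-m+i}$, so multiplying by $Q=x^{s_u}$ (which lands in $\RR[x]$ thanks to $\rho\ge\alpha$) gives a polynomial of degree at most $s_u-m+i\le n-m-1$ for $i\le n-s_u-1$, and Laguerre orthogonality once again kills the integral. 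The main obstacle is precisely this rewriting of $\tilde U_i$: one must verify that the $\kappa_i^g$-constraints eliminate every negative power $x^{-l-1}$ with $l\le m-2-i$ despite $\tilde U_i$ being assembled from two different families $U_{i,\xi}^g$ ($g<\alpha$) and $V_{i,\xi}^g$ ($g\ge\alpha$), so that the effective degree of $Q\tilde U_i$ drops from the naive $s_u-1$ to $s_u-m+i$, just as in the simpler $\U_i$ of Section~3. Once this structural identity is in place, the rest of the argument runs in parallel to Theorem~\ref{t4.1}, which is presumably what the authors mean by ``the proof is similar and it is omitted.''
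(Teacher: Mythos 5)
Your proposal is correct and is exactly the argument the paper intends: the authors omit the proof with the remark that it is ``similar'' to that of Theorem \ref{t4.1}, and you have carried out that adaptation faithfully, correctly isolating the three roles of the terms in $\rho=\max\{m,\max G-\alpha+1,\alpha\}$ (polynomiality of the Sobolev integrand, killing of the purely discrete part, polynomiality of $Q\tilde U_i$) and verifying the analogue of (\ref{defu3}) for $\sum_{g<\alpha}U_{i,\xi}^g+\sum_{g\ge\alpha}V_{i,\xi}^g$, which indeed goes through because $(\alpha-l)_l=0$ for $l\ge\alpha$ and $w_l^g=0$ for $l>g$, so both families contribute identically on the range where (\ref{epm1}) applies. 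No gaps.
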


We finally prove one of the main results of this paper.

\begin{theorem}\label{t5.4} Let $\alpha$ be a positive integer with $\alpha\le \max G $, and assume
that conditions (\ref{mai}) hold. Then the sequence $(q_n)_n$ only satisfies a three-term recurrence relation as in (\ref{ttrr}) when the polynomials $\R_g(x)$ have the form (\ref{rpm}) or (\ref{rpm1}) (depending on whether $\alpha \ge m$ or $\alpha\le m-1$, respectively). Hence, for any other choice of $\R_g$, $g\in G$, the polynomials $(q_n)_n$ are no longer orthogonal with respect to a measure.
\end{theorem}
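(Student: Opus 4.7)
The plan is to assume that $(q_n)_n$ satisfies a three-term recurrence of the form (\ref{ttrr}) and, via the orthogonality supplied by Theorem \ref{l5.1} together with a Casoratian argument in the spirit of Theorem \ref{pzt}, to derive successively stronger algebraic conditions that force both $G$ and the $\R_g$'s to have the claimed form.

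The first step is to feed (\ref{ttrr}) into the Sobolev-type bilinear form. Substituting the recurrence in the left argument and invoking the second part of Theorem \ref{l5.1} yields $\langle xq_n,x^i\rangle_\xi=a_n\langle q_{n+1},x^i\rangle_\xi+b_n\langle q_n,x^i\rangle_\xi+c_n\langle q_{n-1},x^i\rangle_\xi=0$ for $n\ge m+1$ and $0\le i\le n-2$. Lemma \ref{l5.2} with $k=1$ rewrites this as $\sum_{g\in G}\kappa_i^g\sum_{j=0}^m\beta_{n,j}r_g(n-j)=0$ with $r_g(x)=\sum_{l=1}^g(\alpha-l)w_l^g\binom{x+l-1}{l-1}$. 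Since the orthogonality used here holds with no assumption on the $\kappa_j^g$'s, one is free to take $\kappa_i^g=\delta_{g,\hat g}$ for each $\hat g\in G$ individually, which isolates $\sum_j\beta_{n,j}r_{\hat g}(n-j)=0$ for all large $n$. Exactly as in the proof of Theorem \ref{pzt}, this sum is the Casoratian $\Phi^{r_{\hat g},\R_{g_1},\ldots,\R_{g_m}}(n)$ and must vanish identically; hence $r_{\hat g}$ lies in the $\RR$-span of $\{\R_g\}_{g\in G}$ for every $\hat g$. Equivalently, the linear operator $T\colon\binom{x+l}{l}\mapsto(\alpha-l)\binom{x+l-1}{l-1}$ (with $T(1)=0$) preserves $V:=\operatorname{span}_{\RR}\{\R_g\}_{g\in G}$.

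From this invariance I would first pin down $G$. Since the $\R_g$'s have distinct degrees, every nonzero element of $V$ has degree in $G$, whereas $T\R_g$ has degree exactly $g-1$ with leading coefficient $(\alpha-g)/(g-1)!$ whenever $\alpha\ne g$. Applied to $g=\min G$ (where $g-1\notin G$), the only way $T\R_g\in V$ is for that leading coefficient to vanish, forcing $\alpha=\min G$. For each $k\ge 2$, $g_k\ne\alpha$ then forces $g_k-1\in G$, which combined with $G$ being strictly increasing gives $g_k-1=g_{k-1}$. Hence $G=\{\alpha,\alpha+1,\ldots,\alpha+m-1\}$. With $G$ determined, the $\R_g$'s are pinned down by an induction on $h=1,\ldots,m$ (writing $g_h=\alpha+h-1$). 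For $h=1$, $T\R_\alpha\in V$ forces $T\R_\alpha=0$ (since $V$ has no nonzero element of degree below $\alpha$), which gives $w_l^\alpha=0$ for $l=1,\ldots,\alpha-1$. For $h\ge 2$, $T\R_{g_h}$ has degree $g_{h-1}$ and must equal $(1-h)\R_{g_{h-1}}$ modulo a correction of lower degree that itself has to lie in $V$; equating coefficients in the basis $\{\binom{x+l}{l}\}$ propagates constraints that, by induction, reproduce exactly the formulas in (\ref{rpm}), with $a_{h-1}=w_0^{g_h}$ as the only new free parameter at step $h$. When $\alpha\le m-1$ there is no longer enough room in $V$ below degree $\alpha$ at the later stages of the induction to absorb certain correction terms, so some of the $a_j$'s are forced to vanish, yielding the reduced form (\ref{rpm1}).

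The main obstacle is the combinatorial bookkeeping in this final inductive step: one needs to verify that the cascade of coefficient-matching conditions really does recover the closed expressions in (\ref{rpm}) and (\ref{rpm1}), which should reduce to an identity in shifted binomial coefficients provable via (\ref{fcc}) and induction on $h$. Tracking precisely which parameters $a_j$ are forced to vanish when $\alpha\le m-1$ is the delicate point that separates the two cases.
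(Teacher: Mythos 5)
Your proposal is correct and follows essentially the same route as the paper: feed the three-term recurrence into $\langle \cdot,\cdot\rangle_\xi$, use Lemma \ref{l5.2} with $k=1$ and the choice $\kappa_i^g=\delta_{g,\hat g}$ to obtain a vanishing Casoratian $\Phi^{r_{\hat g},\{\R_g\}}\equiv 0$, and then exploit the degree formula (\ref{gcd}) to force $\alpha=\min G$, $G$ a segment, and the cascade of coefficient identities giving (\ref{rpm}) or (\ref{rpm1}). Your repackaging of the row-reduction as the invariance $T V\subseteq V$ for the operator $T\colon\binom{x+l}{l}\mapsto(\alpha-l)\binom{x+l-1}{l-1}$ is a clean way to organize the same computation, and the final combinatorial verification you flag as the remaining work is left at the same level of detail in the paper itself.
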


\begin{proof}
We proceed by \textit{reductio ad absurdum}.
Assume that the sequence $(q_n)_n$ satisfies the three-term recurrence relation
of the form (\ref{ttrr}).
Theorem \ref{l5.1} gives $0=\langle xq_n,1\rangle$, for $n\ge m+1$ (with no assumptions on the numbers $\kappa_i^g$). Using Lemma \ref{l5.2} (for $k=1$ and $i=0$) we can deduce that
$$
0=\langle xq_n,1\rangle=\Gamma(\alpha)\sum_{g\in G}\kappa_0^g\sum_{j=0}^m\beta_{n,j}\sum_{l=1}^g(\alpha-l)w_l^g\binom{n-j+l-1}{l-1}.
$$
For $\hat g$ in $G$, by taking $\kappa_0^g=\delta_{g,\hat g}$, we get
\begin{equation}\label{aet}
0=\sum_{j=0}^m\beta_{n,j}\sum_{l=1}^{\hat g}(\alpha-l)w_l^{\hat g}\binom{n-j+l-1}{l-1}.
\end{equation}
Write $r_{\hat g}$ for the polynomial
\begin{equation*}\label{aet2}
r_{\hat g}(x)=\sum_{l=1}^{\hat g}(\alpha-l)w_l^{\hat g}\binom{x+l-1}{l-1}.
\end{equation*}
Consider now the Casoratian determinant (\ref{defcd}) defined by the polynomials $r_{\hat g},\R_g, g\in G$: $\Phi _{\hat g}(x)=\Phi ^{r_{\hat g},\{\R_g\}_{g\in G}}(x)$.
From the definition of $\beta_{n,j}$ (\ref{defb}), the identity (\ref{aet}) can then be rewritten as $0=\Phi _{\hat g}(x)$. Consider now $\hat g=g_1=\min G$. If $\alpha\not =g_1$, the polynomial $r_{\hat g}$ has degree $g_1-1\not \in G$ ($w_{\hat g}^{\hat g}=1$), and so $0=\Phi _{\hat g}(x)$ is a contradiction because $\Phi _{\hat g}$ is a polynomial of degree $\hat g-1+\sum_{g\in G}g-\binom{m+1}{2}\ge 0$ (see (\ref{gcd})). Hence $\alpha=g_1$. Since we can always take a reduced representation of $r_{\hat g}$ (see definition in \eqref{aqj}), we have that $w_l^g=0$, $l=1,\ldots g_1-1$ and therefore $r_{\hat g}=0$. This gives for $\R_{g_1}$ the form
\begin{equation}\label{aet3}
\R_{g_1}(x)=\binom{x+\alpha}{\alpha}+a_0,
\end{equation}
for certain real number $a_0$.

Take now $\hat g=g_2$, the second element of $G$. Since $\alpha\not =g_2$, the polynomial $r_{\hat g}$ has degree $g_2-1$ and so $g_2-1\not \in G$, otherwise $0=\Phi _{\hat g}(x)$ is a contradiction because $\Phi _{\hat g}$ would be a polynomial of degree $g_2-1+\sum_{g\in G}g-\binom{m+1}{2}>0$ (see (\ref{gcd})). Hence, we conclude that $g_2-1 \in G$ and so $g_2=\alpha+1$. This gives (notice that $w_{g_2-1}^{g_2}=w_{g_1}^{g_2}=0$)
$$
r_{g_2}(x)=-\binom{x+\alpha}{\alpha}+\sum_{l=1}^{g_2-2}(\alpha-l)w_l^{g_2}\binom{x+l-1}{l-1}.
$$
Using (\ref{aet3}), we get $\Phi_{g_2}(x)=\Phi ^{\tilde r_{g_2},\{\R_g\}_{g\in G}}(x)$,
with
$$
\tilde r_{g_2}(x)=\sum_{l=1}^{g_2-2}(\alpha-l)w_l^{g_2}\binom{x+l-1}{l-1}+a_0.
$$
Proceeding as before, we can then conclude that $w_l^{g_2}=0$, $l=2,\ldots g_2-1$, and
$$
w_1^{g_2}=-\frac{a_0}{\alpha-1},
$$
(where we take $a_0=0$ and $w_1^{g_2}=0$ for $\alpha =1$). This gives
\begin{equation*}\label{aet4}
\R_{g_2}(x)=\binom{x+\alpha+1}{\alpha+1}-\frac{a_0}{\alpha-1}(x+1)+a_1,
\end{equation*}
for certain real number $a_1$.
We can now proceed in the same way to get that
$$
\R_{g_h}(x)=\begin{cases}
\displaystyle\binom{x+\alpha+h-1}{\alpha+h-1}+(h-1)!\sum_{l=0}^h\frac{(-1)^la_{h-l-1}}{(\alpha-l)_l}\binom{x+l}{l},&\alpha\ge m,\\
\displaystyle\binom{x+\alpha+h-1}{\alpha+h-1}+(h-1)!\sum_{l=0}^{h+\alpha-m-1}\frac{(-1)^la_{h-l-1}}{(\alpha-l)_l}\binom{x+l}{l},&\alpha\le m-1.
\end{cases}
$$
That is, $\R_g$ has the form (\ref{rpm}) or (\ref{rpm1}) (depending on whether $\alpha \ge m$ or $\alpha\le m-1$, respectively).
\end{proof}

For $\alpha=1,\ldots, \max G$, Theorem \ref{pzt} is no longer true, although the structure of $\alp_n$ can have a similar phenomenon to that displayed in Remark \ref{vrr}, as the following example shows. Take $\alpha=1$, $G=\{1,2,4\}$ and
$$
R_1(x)=x+2,\quad R_2(x)=x^2,\quad R_4(x)=x^4+1.
$$
We have that $\Omega_G(n)=-6n^4+16n^3+54n^2-208n+166\not =0$, $n\ge 0$. Using Maple one can see that the polynomials $(q_n)_n$ satisfy a recurrence relation of the form (\ref{qrr}) for
$$
Q(x)=x^3+\frac{6}{7}x^2.
$$
Computational evidence suggests that
$$
\alp_n=\{ p(x)x^4+c_0Q(x)+c_1:p\in \RR[x],c_0,c_1\in \RR\}.
$$

\section*{Disclosure statement}

No potential conflict of interest was reported by the authors.

\section*{Funding}

This work was partially supported by MTM2015-65888-C4-1-P (Ministerio de Econom\'ia y Competitividad), FQM-262 (Junta de Andaluc\'ia), Feder Funds (European Union), PAPIIT-DGAPA-UNAM grants IA102617 and IN104219 (M\'exico) and CONACYT grant A1-S-16202 (M\'exico).

\end{document}